\ifpdf\usepackage{epstopdf}\fi
\newcounter{theorems}
\newtheorem{lem}[theorems]{Lemma}
\newtheorem{prop}[theorems]{Proposition}
\newtheorem{cor}[theorems]{Corollary}
\newtheorem{theorem}[theorems]{Theorem}
\newtheorem{defin}{Definition}
\newtheorem{rem}[theorems]{Remark}
\newtheorem{remark}[theorems]{Remark}
\def \CO {\mathfrak C}
\def \CI {\mathfrak C^1}
\def \TT {\mathbb T^2}
\def \NN {\mathbb N}
\def \RR {\mathbb R}
\def \eps {\varepsilon}
\def \UK {UK}
\def \FI {{F_{init}}}
\def \NI {{N_{init}}}
\def \di {\delta_{init}}
\def \FB {F_{Bow}}
\def \DD {\mathbb T^2 \setminus UK}
\def \Homeo {\mathrm{Homeo}}
\def \HS {\mathcal H}
\def \Lcal {\mathcal L}
\DeclareMathOperator{\supp}{supp}
\DeclareMathOperator{\Leb}{Leb}
\def \dh {\partial_h} 
\def \dv {\partial_v} 
\def \Dop {Z(H)} 
\DeclareMathOperator{\intt}{int}
\def \FPL {F_\infty}
\def \FPLL {F_L}
\def \FL {F_{Lin}}
\def \LL {\mathbb T^2 \setminus R}
\def \KK {K}
\DeclareMathOperator{\Diff}{Diff}
\DeclareMathOperator{\dist}{dist}
\def \FBL {\Lcal_\Pi(F_0)}
\def \fpl {f_L}
\def \phy {\varphi}
\def \FH {F_{\HS}}
\def \CT {C_{thick}}
\def \K {\mathcal K}
\DeclareMathOperator{\leb}{Leb}
\def\blfootnote{\gdef\@thefnmark{}\@footnotetext}
\begin{document}
\title{A $C^1$ Anosov diffeomorphism with a horseshoe that attracts almost any point
\footnote{Supported in part by the RFBR, 16-01-00748-a.}}
\author{C. Bonatti\footnote{Universit\'e de Bourgogne, Dijon}, S. S. Minkov\footnote{Brook Institute of Electronic Control Machines, Moscow \newline{} \indent\;\, Moscow Center for Continuous Mathematical Education, Moscow}, A. V. Okunev\footnote{National Research University Higher School of Economics, Moscow}, I. S. Shilin\footnote{National Research University Higher School of Economics, Moscow}}
\date{}

\maketitle
\begin{abstract}
We present an example of a $C^1$ Anosov diffeomorphism of a two-torus with a physical measure such that its basin has full Lebesgue measure and its support is a horseshoe of zero measure.
\end{abstract}

\blfootnote{\textit{Mathematics Subject Classification (2010).} Primary: 37D20. Secondary:  37C40, 37C70, 37E30, 37G35.}

\blfootnote{\textit{Keywords.} Anosov diffeomorphisms, physical measure, Milnor attractor.}

\section{Introduction}
Consider a diffeomorphism $F$ that preserves a probability measure $\nu$.  The \emph{basin} of $\nu$ is the set of all points $x$ such that the sequence of measures
\[
\delta^n_x := \frac 1 n (\delta_x + \dots + \delta_{F^{n-1}(x)}) 
\]
converges to $\nu$ in the weak-$*$ topology. 
A measure is called \emph{physical} if its basin has positive Lebesgue measure. It is well known (see~\cite[$\S1.3$]{BDV}) that any transitive $C^2$ Anosov diffeomorphism (note that all known Anosov diffeomorphisms are transitive) has a unique physical measure and
\begin{itemize}
\item the basin of this measure has full Lebesgue measure,
\item the support of this measure coincides with the whole phase space.
\end{itemize}

However, in $C^1$ dynamics there are many phenomena that are impossible in $C^2$. For instance, Bowen~\cite{Bow} has constructed an example of a $C^1$ diffeomorphism of the plane with a \emph{thick} horseshoe (i.e., a horseshoe with positive Lebesgue measure) and Robinson and Young~\cite{RY} have embedded a thick horseshoe in a $C^1$ Anosov diffeomorphism of $\TT$. Using an analog of their constructions, we prove the following theorem.
\begin{theorem} \label{t:thm}
There exists a $C^1$ Anosov diffeomorphism $\FH$ of the 2-torus $\TT$ that admits a physical measure $\nu$ such that
\begin{itemize}
\item its basin has full Lebesgue measure,
\item it is supported on a horseshoe $\HS$ of zero Lebesgue measure, 
\item $\omega(x) = \HS$ for Lebesgue-a.e. $x\in\TT$. 
\end{itemize}
\end{theorem}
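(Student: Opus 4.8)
The plan is to build $\FH$ as a $C^1$ limit of Anosov diffeomorphisms obtained by repeated ``Bowen surgery'' on a linear model, designed so that the limiting horseshoe $\HS$ is \emph{thick in the unstable direction but thin in the stable one}. Two reductions fix the target. First, since $\omega(x)=\HS$ for a.e.\ $x$ and $\HS$ is a locally maximal hyperbolic set, the forward orbit of a.e.\ $x$ must eventually enter and stay in any prescribed adapted neighbourhood of $\HS$, so some forward iterate of a.e.\ $x$ lies in the local stable set $W^s_{loc}(\HS)$; hence $W^s(\HS)=\bigcup_{n\ge0}\FH^{-n}(W^s_{loc}(\HS))$ must have full Lebesgue measure, and in particular $\FH$ cannot preserve area (integrating $\delta^n_x\to\nu$ over $x$ would otherwise force $\nu=\leb$). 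Second, locally $\HS$ is a product $C^u\times C^s$ of Cantor sets inside an unstable and a stable leaf, and $W^s_{loc}(\HS)$ is, up to the product structure, $C^u$ times a stable interval; so we want $\HS$ with $|C^s|=0$, giving $\leb(\HS)=0$, but $|C^u|>0$, giving $\leb(W^s_{loc}(\HS))>0$. The key point is that weakening the unstable expansion near $\HS$ --- precisely what fattens $C^u$ --- simultaneously makes $|\det D\FH|<1$ near $\HS$, and that is the mechanism which pumps Lebesgue measure onto $W^s(\HS)$.

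For the construction, start from a linear Anosov diffeomorphism $\FL$ of $\TT$ with a Markov partition and let $\KK$ be the horseshoe of points whose orbit avoids the interior of one Markov rectangle $R$; for $\FL$ this horseshoe is null in both directions, so it is only a template. Following Bowen \cite{Bow} and Robinson--Young \cite{RY}, define $C^1$ perturbations $\FPLL$, each supported in a shrinking neighbourhood of $\KK\cup R$, by inserting at step $L$ rescaled Bowen-type local models into the level-$L$ cells of the Markov refinement, arranged so that: the invariant cone fields are preserved with hyperbolicity constants bounded away from the degenerate regime, so each $\FPLL$ is Anosov; the distortion stays bounded along stable leaves, so $C^s$ stays null and, quantitatively, $\leb(\HS)=0$ (this is where the constant $\CT$ enters); the weakening of the unstable expansion is self-similar with amounts summable in $L$, so the unstable Cantor sets converge to a limit $C^u$ of positive length; and $\|\FPLL-F_{L-1}\|_{C^1}\le 2^{-L}$. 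Then $\FPLL\to\FPL=:\FH$ in $C^1$, and by the uniform cone condition $\FH$ is a $C^1$ Anosov diffeomorphism for which $\HS:=\bigcap_n\FH^n(\TT\setminus R)$ is an invariant horseshoe with $\leb(\HS)=0$ and $\leb(W^s_{loc}(\HS))>0$.

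For the dynamical conclusions: since $|\det D\FH|<1$ near $\HS$ and $W^s_{loc}(\HS)$ has positive area, direct estimates on the surgery show that the forward-invariant set of points whose orbit never meets $W^s_{loc}(\HS)$ is null, i.e.\ $\leb(W^s(\HS))=1$. For $x\in W^s_{loc}(\HS)$ the forward orbit of $x$ shadows that of the point $p\in\HS$ on the same local stable leaf, so $\delta^n_x$ has the same weak-$*$ accumulation set as the Birkhoff averages of $p$; the surgery is tuned so that the push-forwards under $\FH$ of normalized length on $C^u$ converge to an $\FH$-invariant measure $\nu$ on $\HS$, and for Lebesgue-a.e.\ such $x$ this common limit equals $\nu$. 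Together with $\leb(W^s(\HS))=1$ this makes $\nu$ physical with full-measure basin. Finally, $W^s_{loc}(\HS)$ is forward-invariant and lies in arbitrarily small neighbourhoods of $\HS$, so $\omega(x)\subseteq\HS$ for a.e.\ $x$; since $\HS$ is transitive, $\supp\nu=\HS$, and therefore $\omega(x)=\HS$.

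Everything hard is in the surgery: the unstable Cantor set must be fattened to positive length --- so that first a positive-measure and then, after iteration, a full-measure set is forward-asymptotic to $\HS$ --- while the stable Cantor set stays null (so $\leb(\HS)=0$) and the hyperbolicity does not degenerate through the infinitely many perturbations (so the $C^1$ limit is still Anosov). Carrying all of these estimates at once --- the quantitative thickness bookkeeping behind $\CT$, the area-pumping that yields $\leb(W^s(\HS))=1$, and the convergence $\delta^n_x\to\nu$ --- is the crux, and it is delicate precisely because $\FH$ is only $C^1$: its invariant foliations need not be absolutely continuous, so these measure statements cannot be read off the foliations and must be obtained by direct control of the construction.
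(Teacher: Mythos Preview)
Your geometric setup matches the paper's: a \emph{semi-thick} horseshoe $\HS$ (thick unstable Cantor factor, null stable Cantor factor), built by Bowen--Robinson--Young surgery on a linear Anosov model so that $\leb(\HS)=0$ but $\leb(W^s_{loc}(\HS))>0$. Where you diverge from the paper is in the passage from $\leb(W^s_{loc}(\HS))>0$ to $\leb(W^s(\HS))=1$. You argue this by ``direct estimates on the surgery'' together with $|\det D\FH|<1$ near $\HS$, but that Jacobian inequality only controls the evolution of mass \emph{near} $\HS$; it says nothing about orbits that stay away from $\HS$ forever. The paper flags exactly this obstruction: one could in principle arrange another thick invariant set far from $\HS$ whose points are never attracted. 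Your proposal contains no mechanism that rules this out for the specific limit map $\FPL$ you construct, and the paper explicitly declines to produce one, writing that ``presenting a constructive example would have led to far more technical difficulties.'' Instead the paper introduces a Baire space $\CI$ of $C^1$ Anosov diffeomorphisms that all share the same semi-thick horseshoe and the same dynamics on $S=W^s_{loc}(\HS)$, shows that for each $\eps>0$ the set $A^1_\eps=\{F\in\CI:\leb(\cup_{n\ge0}F^{-n}S)>1-\eps\}$ is open and dense (density uses $C^2$ approximants, for which the SRB theory gives full-measure basins for free), and takes $\FH$ in the residual intersection $\cap_n A^1_{1/n}$. This genericity device is the paper's key idea, and it is precisely what your outline is missing.

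A second, smaller gap concerns the physical measure. You assert that the Birkhoff averages $\delta^n_x$ converge, for Lebesgue-a.e.\ $x\in W^s_{loc}(\HS)$, to a single measure $\nu$ obtained as a limit of push-forwards of length on $C^u$. Knowing only that $x$ lies on the local stable leaf of some $p\in\HS$ tells you the accumulation sets of $\delta^n_x$ and $\delta^n_p$ coincide, but not that the limit exists or is the same for a.e.\ $p$; you still need an ergodicity/unique-ergodicity statement for the relevant measure class on $\HS$, and the phrase ``the surgery is tuned so that\dots'' does not supply one. In the paper this step is handled once one already knows $\leb(B(F))=1$: the dynamics on $S$ is fixed across $\CI$, and the argument for a unique physical measure supported on $\HS$ is carried out there rather than via your push-forward limit.
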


\noindent Note that $\FH$ is transitive, since by~\cite{New} any Anosov diffeomorphism of $\TT$ is transitive. 

In our construction the horseshoe $\HS$ will be \emph{semithick}, i.e., $\HS$ will be a product of a Cantor set with positive measure in the unstable direction and a Cantor set with zero measure in the stable direction. The dynamics on $\HS$ in the unstable direction will be as in Bowen's thick horseshoe, and in the stable direction as in a linear Smale's horseshoe. Let us also remark that the last statement of the theorem is not a corollary of the former two. Indeed,  $\delta^n_x \to \nu$ does not imply that $\omega(x) \subset \supp \nu$, since the orbit of $x$ can spend almost all the time near $\supp \nu$ but still be far from this set infinitely many times.

The last statement of Theorem~\ref{t:thm} claims that, despite $\FH$ is transitive, the horseshoe is the "attractor" of $\FH$ if we are interested in the behavior of typical (with respect to the Lebesgue measure) points. This intuition corresponds to the definition of the Milnor attractor (\cite{Mil}), which we now recall. 
\begin{defin}
Consider a diffeomorphism $F: N \to N$ of a compact riemannian manifold $N$; the metric induces the \emph{Lebesgue measure} on $N$. The \emph{Milnor attractor} of $F$ (notation: $A_M(F)$) is the minimal by inclusion closed subset of $N$ that contains $\omega(x)$ for a.e. $x \in N$ with respect to the Lebesgue measure. 
\end{defin}
\noindent The Milnor attractor always exists, as proved in~\cite{Mil}. In our case, $A_M(\FH) = \HS$, by Theorem~\ref{t:thm}. Let us mention that another interesting example of a transitive map with nontrivial Milnor attractor is the intermingled basins example (\cite[\S 11.1.2]{BDV}). 

As mentioned above, any transitive $C^2$ Anosov diffeomorphism has a unique physical measure with full support, so its Milnor attractor is the whole manifold. This also holds for $C^1$ generic transitive Anosov diffeomorphisms, as the following remark claims. We will say that some property is possessed by a \emph{generic} diffeomorphism in some class if diffeomorphisms with this property form a residual subset in this class.

\begin{remark} \label{r:gen-Anosov}
For a generic transitive $C^1$-smooth Anosov diffeomorphism, the set of points whose positive semi-orbit is everywhere dense has full Lebesgue measure.
\end{remark}
\begin{proof}
For a diffeomorphism $F\colon M \to M$ and an open set $U \subset M$, denote by $A(U) = U \cup F^{-1}(U) \cup \dots$ the set of points whose positive semi-orbit intersects~$U$. Let us show that for any $U$ and $\eps > 0$ the set $\mathcal A(U, \eps)$ of diffeomorphisms such that the Lebesgue measure of $A(U)$ is greater than $1-\eps$ is an open and dense subset of the (open) set of transitive Anosov diffeomorphisms. 

Recall that any $C^2$-smooth transitive Anosov diffeomorphism has a unique physical measure that attracts almost every point (w.r.t. the Lebesgue measure). So for such maps the positive semi-orbit of almost any point is dense. Since $C^2$-diffeomorphisms are dense in $\Diff^1(M)$, the set $\mathcal A(U, \eps)$ is dense.

Let us show that $\mathcal A(U, \eps)$ is open. Let $F \in \mathcal A(U, \eps)$. Then for some number $N$ the measure of the set $U \cup F^{-1}(U) \cup \dots \cup F^{-N}(U)$ is greater than $1-\eps$. Since the set $U$ is open, the same is true for any diffeomorphism sufficiently close to~$F$, which proves the openness.

Choose a countable base of topology $(U_n)_{n\in\mathbb N}$ on~$M$. The set $\bigcap_{n, m} \mathcal A(U_n, \frac{1}{m})$ is residual, and for every $F \in \bigcap_{n, m} \mathcal A(U_n, \frac{1}{m})$ the positive semi-orbit of almost every point is dense. 
\end{proof}

Since the definition of the Milnor attractor uses the Lebesgue measure, the Milnor attractor may be changed after conjugation by a homeomorphism. This often represents a big technical difficulty. Consider, for example, partially hyperbolic skew products over Anosov diffeomorphisms. A small perturbation of such skew product is conjugated to another skew product by a homeomorphism (see \cite{IN} and references therein). However, this conjugacy can change the Milnor attractor. 

Let us say that the Milnor attractor of a $C^1$ diffeomorphism $F$ is \emph{topologically invariant in $C^1$}, if for any $C^1$-diffeomorphism $G$ such that $G$ and $F$ are conjugated by a homeomorphism ($G=H \circ F \circ H^{-1}$) we have $A_M(G) = H(A_M(F))$. The first example of a non topologically invariant Milnor attractor was constructed by S. Minkov in his thesis and is similar to the example from~\cite[$\S 3$]{KRM}. Our construction gives an open set of such examples: 
\begin{cor}
For any $C^1$-diffeomorphism that is sufficiently close to $\FH$, the Milnor attractor is not topologically invariant in $C^1$. 
\end{cor}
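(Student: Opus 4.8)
The plan is to reduce the corollary to the single assertion that $A_M(G)\neq\TT$ for every $C^1$ diffeomorphism $G$ in a suitable $C^1$-neighbourhood $\mathcal U$ of $\FH$. Indeed, Anosov is a $C^1$-open condition, so every $G\in\mathcal U$ is Anosov, and, being an Anosov diffeomorphism of $\TT$, it is topologically conjugate to a linear hyperbolic automorphism $A$ of $\TT$ (a classical theorem of Franks and Manning): write $A=H\circ G\circ H^{-1}$ for some $H\in\Homeo(\TT)$. The linear automorphism $A$ preserves the Lebesgue measure, which is ergodic with respect to $A$, so Lebesgue-almost every $A$-orbit equidistributes; hence $\omega(x)=\TT$ for a.e. $x$ and $A_M(A)=\TT$. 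If $A_M(G)\neq\TT$, then $H(A_M(G))$ is a proper closed subset of $\TT$ (because $H$ is a homeomorphism), so $A_M(A)=\TT\neq H(A_M(G))$; since $A$ is $C^\infty$ and $A=H\circ G\circ H^{-1}$, this is precisely the statement that $A_M(G)$ is not topologically invariant in $C^1$.

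So it suffices to shrink $\mathcal U$ so that $A_M(G)\neq\TT$ for all $G\in\mathcal U$. The horseshoe $\HS$ is a basic hyperbolic set, hence $C^1$-robust: after shrinking $\mathcal U$, every $G\in\mathcal U$ has a hyperbolic continuation $\HS_G$, a compact invariant set homeomorphic to $\HS$ — in particular a Cantor set, hence nowhere dense and a proper subset of $\TT$. I would then show that $\HS_G$ still attracts Lebesgue-almost every point, i.e. that its stable set $\{x : \omega(x)\subseteq\HS_G\}$ has full Lebesgue measure; since $\HS_G$ is closed, this gives $A_M(G)\subseteq\HS_G\subsetneq\TT$ and finishes the proof. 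To obtain the attraction I would revisit the proof of Theorem~\ref{t:thm} and check that it rests on finitely many conditions that are open in the $C^1$ topology: roughly, the hyperbolicity constants of $\HS$, the bounded-distortion estimates in the unstable direction that keep the unstable Cantor factor of $\HS$ of positive Lebesgue measure (while the stable Cantor factor stays of zero measure), the sizes of the Markov rectangles and of the complementary gaps of the horseshoe, and the estimate bounding below the Lebesgue measure of the set that is eventually pushed into a fixed neighbourhood of $\HS$. Restricting $\mathcal U$ so that all of these persist yields $\omega(x)\subseteq\HS_G$ for a.e. $x$.

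The main obstacle is exactly this last step. Attracting Lebesgue-almost every point is not a robust phenomenon for soft reasons — by Remark~\ref{r:gen-Anosov} the generic transitive Anosov diffeomorphism does the opposite — so its persistence has to be extracted from the specific estimates rather than invoked; and the decisive one is the Bowen-type thickness, namely that the unstable Cantor factor of $\HS_G$ keeps positive Lebesgue measure (and the stable Cantor factor keeps zero measure), which is the key ingredient that makes the stable set of $\HS_G$ have full Lebesgue measure. Once the distortion bounds underlying these two facts are checked to be $C^1$-open — which is the point at which the construction is designed to leave room — the corollary follows.
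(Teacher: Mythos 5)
Your reduction rests on a claim that is false. You want to show that $A_M(G)\neq\TT$ for \emph{every} $C^1$-diffeomorphism $G$ in some $C^1$-neighbourhood $\mathcal U$ of $\FH$, and you plan to obtain this by checking that the estimates behind Theorem~\ref{t:thm} are $C^1$-open. But $C^2$-diffeomorphisms are $C^1$-dense, so $\mathcal U$ necessarily contains a $C^2$ Anosov diffeomorphism $G$; such a $G$ is transitive (Newhouse), hence has a unique physical measure with full support and full basin, so $A_M(G)=\TT$. The ``main obstacle'' you flag at the end is therefore not a technical difficulty to be worked around by sharpening distortion bounds — it is an outright obstruction, and no shrinking of $\mathcal U$ can help.

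The paper's own proof uses this very fact as the engine of the argument rather than as an obstacle. By structural stability, $\FH$ has a $C^1$-neighbourhood $U$ in which every map is topologically conjugate to every other. Choose a $C^2$ Anosov $G\in U$, so $A_M(G)=\TT$. Now for an arbitrary $F\in U$, $F$ is conjugate \emph{both} to $\FH$ (which has $A_M(\FH)=\HS$, nowhere dense) \emph{and} to $G$ (which has $A_M(G)=\TT$). If the Milnor attractor of $F$ were topologically invariant, the first conjugacy would force $A_M(F)$ to be a homeomorphic image of $\HS$, hence nowhere dense, while the second would force $A_M(F)=\TT$; contradiction. This argument needs no information at all about what $A_M(F)$ actually is, and needs no robustness of the attraction estimates. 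Your version used a single conjugacy ($G$ to the linear model $A$) together with a direct bound $A_M(G)\subsetneq\TT$; the paper's version uses two conjugacies and never has to bound $A_M(F)$, which is precisely what makes it go through.
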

\begin{proof}
Since $\FH$ is structurally stable, it has a $C^1$-neighborhood $U$ such that all maps in $U$ are conjugate to each other. Since $C^2$-diffeomorphisms are dense in the space of $C^1$-diffeomorphisms, one may find a $C^2$ Anosov diffeomorphism $G \in U$. Any map $F \in U$ is conjugate to both $\FH$ and $G$. As discussed above, any Anosov diffeomorphism of $\TT$ is transitive, and for any transitive $C^2$ Anosov diffeomorphism the Milnor attractor is the whole manifold. So $A_M(G) = \TT$. Since $A_M(\FH) = \HS$, the attractor of $F$ is not topologically invariant in $C^1$.
\end{proof}

\section{Plan of the paper}
The rest of the text is devoted to the proof of Theorem~\ref{t:thm}.
To prove Theorem~\ref{t:thm}, we  first define a certain class $\CI$ that consists of $C^1$-smooth Anosov diffeomorphisms with a semi-thick horseshoe and then prove that a generic diffeomorphism in $\CI$ satisfies the conclusion of Theorem~\ref{t:thm}. To construct the class $\CI$, we will first describe a $C^1$-smooth Anosov diffeomorphism $\FI: \TT \to \TT$ that has an invariant semi-thick horseshoe~$\HS$. This description is done in section~\ref{s:construction}; we also state several properties of $\FI$ and define certain subsets of the torus ($\UK$, $S$, $R, \dots$) that will be needed to define the class~$\CI$. The proof of the existence of a map $\FI$ with required properties will be given in section~\ref{s:Finit}.

Denote by $S$ the local stable foliation of the semi-thick horseshoe $\HS$ for the diffeomorphism~$\FI$. The set $S$ is a product of a thick Cantor set and a segment, and therefore it has positive Lebesgue measure.
One might expect that Lebesgue almost all points will have their images inside $S$ after a sufficient number of forward iterations and so will be attracted to~$\HS$. Unfortunately, this is not necessarily true: one could arrange a thick horseshoe far from $\HS$ --- then every its point would not be attracted to~$\HS$. 
However, it is still reasonable to expect that almost every point gets inside $S$ for \emph{a generic} diffeomorphism with a semi-thick horseshoe~$\HS$. This motivates our genericity argument presented in section~\ref{s:Baire}. We think that presenting a constructive example would have led to far more technical difficulties.

The class $\CI$ is introduced in section~\ref{s:Baire}.
For every map in the class $\CI$ the dynamics on the local stable foliation $S$ of the semi-thick horseshoe is the same as for $\FI$. 
For $F \in \CI$, denote by $B(F) = \cup_{n\ge 0} F^{-n}(S)$ the basin of attraction  of the semi-thick horseshoe~$\HS$. In section~\ref{s:Baire} we will show that any diffeomorphism $F \in \CI$ such that $B(F)$ has full Lebesgue measure has the three properties from Theorem~\ref{t:thm}. 
To prove that such diffeomorphisms are generic in~$\CI$, we consider, for every $\eps > 0$, the set $A^1_\eps \subset \CI$ of all $F \in \CI$ such that $\leb(B(F)) > 1-\eps$, where $\leb$ is the probability Lebesgue measure on~$\TT$. We will show that these sets are open and dense in~$\CI$. Thus, the set $A^1_0 = \cap_{n=1}^\infty A^1_{1/n}$ is residual. But for any $F \in A^1_0$ one has $\leb(B(F)) = 1$, so every such $F$ has the properties stated in Theorem~\ref{t:thm}. 

In section~\ref{s:Baire} we also introduce an auxiliary class $\CO \supset \CI$ of \emph{homeomorphisms} of $\TT$ and the analogues $A_\eps$ of the sets $A^1_\eps$ for this class. This class is used to prove that $A^1_\eps$ is dense in $\CI$. It is convenient to begin section~\ref{s:Baire} with the definition of $\CO$, then define $\CI$ using $\CO$, and then prove the main theorem modulo the density and openness of the sets~$A^1_\eps$. 

In sections~\ref{s:open} and~\ref{s:dense} we prove that the set $A^1_\eps$ is open and dense, respectively. To obtain the density we use two technical lemmas which are proved later in section~\ref{s:technical}.

\section{Diffeomorphism \texorpdfstring{$\FI$}{F-init} with a semi-thick horseshoe} \label{s:construction}
In this section we will describe a $C^1$-smooth Anosov diffeomorphism $\FI: \TT \to \TT$ with a semi-thick horseshoe~$\HS$ and define the following subsets of $\TT$ which will play a crucial role in our further construction:
\begin{description}
\item [$\UK$] is a small rectangle around the horseshoe~$\HS$,
\item [$S\subset \UK$] is the union of all local stable fibers of the points of~$\HS$, 
\item [$R$] is a neighborhood of $\UK$ such that outside $R$ the diffeomorphism $\FI$ coincides with a linear Anosov diffeomorphism~$\FL$.
\end{description}
The complete description of $\FI$ and these subsets is a little cumbersome, because it contains many technical properties. 

\subsection{Constructing \UK}
Consider a large number $\NI\in\mathbb{N}$.
Let $\FL$ be a linear Anosov diffeomorphism of $\TT = \mathbb R^2/\mathbb Z^2$ defined by the matrix  $\bigl(\begin{smallmatrix}
2&1 \\ 1&1
\end{smallmatrix} \bigr)^\NI$. Fix the metric on $\TT = \RR^2/\mathbb Z^2$ that comes from $\RR^2$.
Let us call \emph{vertical} the unstable direction of~$\FL$; respectively, the stable direction will be called \emph{horizontal}. All maps of the torus which we consider in this paper preserve the unstable foliation of the diffeomorphism $\FL$. We will call this foliation \emph{vertical} as well. 
We will use the following elementary property of linear Anosov diffeomorphisms of~$\TT$:
\begin{remark} \label{r:dense}
For arbitrary $\eps>0$, if $\NI$ is large enough, the fixed points of $\FL$ form an $\eps$-net on $\TT$.
\end{remark}

\begin{prop} \label{p:horseshoe}
There exists a rectangle $\KK$ with vertical and horizontal edges such that the following holds.
\begin{itemize}
\item
The restriction of $\FL$ to $\KK$ is a linear Smale horseshoe map that takes two horizontal stripes $H_0$ and $H_1$ (adjacent respectively to the lower and the upper horizontal edges of the rectangle) to two vertical stripes $V_0$ and $V_1$. 
\item In the opposite corners of $\KK$ there are two fixed points $p_0$ and $p_1$. 
\item $\FL(\KK) \cap \KK = V_0 \cup V_1$. 
\item
The diameter of $\KK$ can be made arbitrarily small by choosing a large~$\NI$.
\end{itemize}
\end{prop}
\begin{proof}
Choose two sufficiently close (make $\NI$ larger and use Remark~\ref{r:dense} if necessary) fixed points $p_0$ and $p_1$ of the map $\FL$ so that there are no other fixed points in the rectangle $K$ with vertical and horizontal edges and with opposite vertices at $p_0$ and $p_1$. Then $\FL(K) \cap K$ contains two vertical stripes $V_0$ and $V_1$ adjoint to the vertical edges of~$K$. Set $H_0 = \FL^{-1}(V_0)$ and $H_1 = \FL^{-1}(V_1)$. It suffices to show that $\FL(K) \cap K = V_0 \cup V_1$. Suppose there is another connected component $V$ of $\FL(K) \cap K$. Then the horizontal stripe $\FL^{-1}(V)$ is linearly mapped onto $V$, so there must be a fixed point inside~$V$. This contradiction finishes the proof.
\end{proof}

\begin{figure} 
\begin{center}
\begin{tikzpicture}[scale=1.15]
\draw [fill=gray!20, draw=black] (0,0) rectangle (8,1);
\draw [fill=gray!20, draw=black] (0,3) rectangle (8,4);

\draw[pattern=north west lines, pattern color=gray] (1.5 ,0) rectangle (3.3 ,4);
\draw[pattern=north west lines, pattern color=gray] (6.5 ,0) rectangle (4.7 ,4);
\draw (0, 1) -- (0, 3);
\draw (8, 1) -- (8, 3);
\node [left] at (0, 0.5) {$UH_0$};
\node [left] at (0, 3.5) {$UH_1$};
\node [below] at (8, 0) {$\UK$};
\node [below] at (2.4, 0) {$UV_0$};
\node [above] at (5.6, 4) {$UV_1$};
\draw [fill] (2, 4) circle[radius=2pt];
\node [below] at (6, 0) {$p_1$};
\node [above] at (2, 4) {$p_0$};
\draw [fill] (6, 0) circle[radius=2pt];
\vspace{-15mm}
\end{tikzpicture}
\end{center}
\caption{The set $\UK$. The vertical stripes of $\UK \setminus K$ are depicted wider than they are.}
\label{f:UH}
\end{figure}
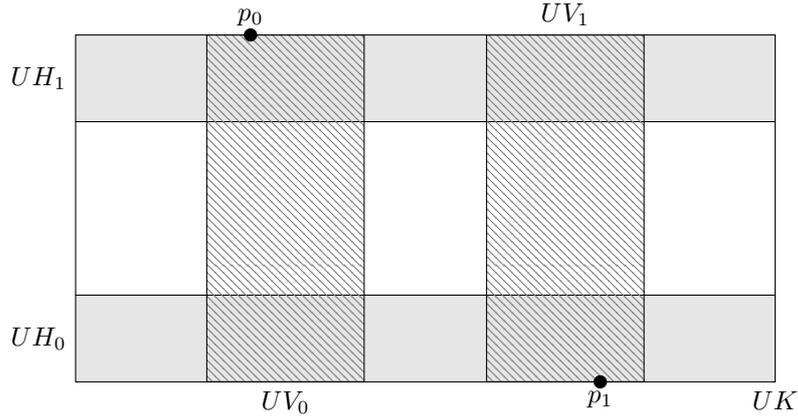
Since $\NI$ is large, we can assume that the edges of the rectangle $\KK$ are all shorter than~$0.01$. Choose a fixed point $q$ of $\FL$ that lies at distance at least $0.3$ from $\KK$. Let us widen the rectangle $\KK$ in the horizontal direction with ratio at most $1.01$ so that both vertical boundary segments of the new rectangle be on the unstable manifold of the point~$q$. Denote the new rectangle by~$\UK$. 

\subsection{Description of the map~\texorpdfstring{$\FI$}{F-init}}
Let $UH_i$ be the continuations of the horizontal stripes $H_i$ into $\UK \setminus K$, and let $UV_i = \FL(UH_i)$ (see fig.~\ref{f:UH}).
Let us construct the map $\FB: UH_0 \cup UH_1 \to UV_0 \cup UV_1$ whose maximal invariant set will be our semi-thick horseshoe~$\HS$. The details will be postponed until section~\ref{s:Finit}, and here we give but a brief description.

The diffeomorphism $\FL$ maps $UH_0$ to $UV_0$ and expands linearly in the vertical direction while contracting linearly in the horizontal one. The new map $\FB$ will map  $UH_0$ onto $UV_0$ too and will also be a direct product of a vertical expansion and a horizontal contraction. The horizontal contraction will be the same as for $\FL$, but the linear vertical expansion will be replaced by a nonlinear one that comes from the Bowen's thick horseshoe construction. 
We define the restriction of $\FB$ to $UH_1$ analogously. 

The horseshoe $\HS = \bigcap \limits_{n = -\infty}^{\infty} \FB^n(UH_0 \cup UH_1)$ is a product of a horizontal thin Cantor set $C_{thin}$ and a vertical thick Cantor set $C_{thick}$. The horseshoe $\HS$ has two fixed points $p_0$ and $p_1$, the same as for the linear horseshoe. Denote by $S$ the union of all local stable leaves inside $\UK$ of the points of $\HS$. The set $S$ is a product of a horizontal segment and the thick Cantor set $C_{thick}$.

Now we introduce some notation required to describe $\FI$.  
In the neighborhood of every point $z \in \TT$ one can consider rectangular coordinates $(x,y)$ where the vertical fibers are parallel to $Oy$-axis and the horizontal ones are parallel to $Ox$. In each tangent plane the lines $dx=\pm dy$ circumscribe a horizontal cone that we will denote~$C_H(z)$. 

Denote by $\tilde R$ the rectangle obtained by widening $\UK$ vertically and horizontally by $0.01$ (center is preserved).

\begin{prop} \label{p:Finit}
For $\NI$ large enough, there exists a $C^1$-smooth Anosov diffeomorphism $\FI: \TT \to \TT$ such that:
\begin{enumerate}
\item
$\FI=\FB$ on $UH_0 \cup UH_1$. Thus, $\FI$ has a semi-thick horseshoe $\HS$.
\item
$\FI=\FL$ outside $\tilde R$. 
\item $\FI(UH_i)=UV_i$. 
\item \label{iv:only-V} $\FI(\UK) \cap \UK = UV_0 \cup UV_1$.
\item the unstable fibers of $\FI$ are strictly vertical.
\item \label{iv:expand} $\FI$ expands the vertical fibers by a factor of at least $1.2$.
\item \label{iv:cones} For any point $x \in \TT$ the image of the interior of the cone $C_H(x)$ under $d\FI$ covers the closure of the cone $C_H(\FI(x))$.
\end{enumerate}
\end{prop}
The proof of this proposition is given in section~\ref{s:Finit} below.

\subsection{Construction of the ``frame''~\texorpdfstring{$R$}{R}}
We will often consider curvilinear quadrilaterals whose boundary is formed by two vertical segments and two curves transverse to the vertical foliation. We will call them ``rectangles''. For such a rectangle $P$ we will call the union of its two vertical boundary segments \emph{the vertical boundary} (notation: $\dv P$) and the union of the other two boundary curves will be called \emph{the horizontal boundary} (notation: $\dh P$).

For the map $\FI$, we will cover the rectangle $\tilde R$ by a curvilinear rectangle $R$ (see fig.~\ref{f:R}) such that 
\begin{itemize}
\item $\dh R$ is contained in the stable fiber of the point $p_0$,
\item $q \not\in R$.
\end{itemize}
Let us construct such rectangle~$R$. The vertical boundary of $R$ will consist of two vertical segments obtained by extending the vertical boundary of $\tilde R$ up and down. The horizontal boundary of $R$ will be formed by two properly chosen pieces of the stable manifold of the point $p_0$ in-between the two segments of the vertical boundary. In order to choose the former, we will need two facts: 1) the stable fiber of $p_0$ is everywhere dense (because every Anosov diffeomorphism of $\TT$ is transitive~\cite{New}), and 2) stable directions of $\FI$ lie inside $C_H$ (because the preimage of the cone field $C_H$ is contained in~$C_H$). 

Let us choose the upper boundary $s$ of the frame $R$ --- the lower one is constructed analogously. Recall that by construction both sides of the rectangle $\tilde R$ are shorter than~$0.05$. Let us move up by $0.03$ from the mid-point of the upper edge of $\tilde R$ and denote the resulting point by $w$. 

Take a point $v$ on $W^s(q_0)$ such that $v$ is $0.001$-close to~$w$. Take as $s$ the  curvilinear segment of $W^s(p_0)$ that contains $v$ and lies in-between the vertical boundary segments of~$R$. All tangents to $s$ lie inside the cones of $C_H$. Therefore, the curve $s$ lies inside a cone with center at $v$, horizontal axis and aperture~$\pi/2$. Thus, over each point of the upper boundary of $\tilde R$ there is a point of the curve~$s$ (and $s$ does not intersect the upper boundary of $\tilde R$). Therefore, $R \supset \tilde R$. On the other hand, the point of $s$ over a point of the vertical boundary of $\tilde R$ will be $0.06$-close to it vertical-wise. Since $q$ was chosen to be $0.3$-far from $K$, this implies $q \not\in R$.

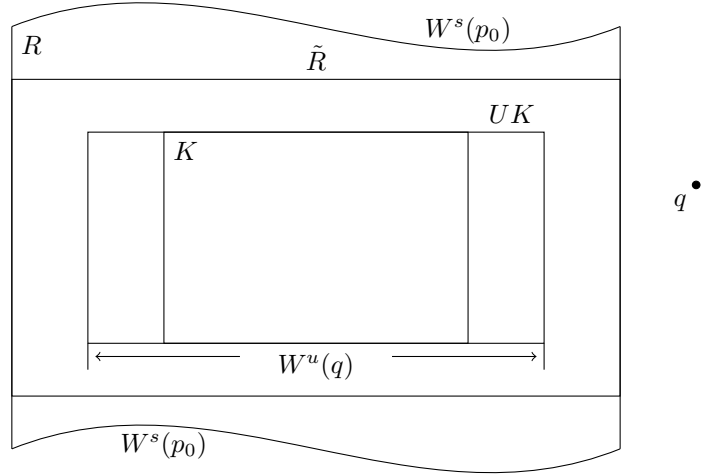
\begin{figure} 
\vspace{-15mm}
\begin{center}
\begin{tikzpicture}[xscale=1, yscale=0.7]
\def \trs{1}
\def \rs{1}
\def \s{0.5}

\draw (0,0) rectangle (4,4);
\draw (-1,0) rectangle (5,4);
\draw (-1-\trs,-\trs) rectangle (5+\trs,4+\trs);
\draw (-1-\trs, -\trs-\rs) -- (-1-\trs, 4+\trs+\rs);
\draw (5+\trs, -\trs-\rs) -- (5+\trs, 4+\trs+\rs);

\draw (-1-\trs, -\trs-\rs) to[out=30, in = 180+30] (5+\trs, -\trs-\rs);
\draw (-1-\trs, 4+\trs+\rs) to[out=30, in = 180+30] (5+\trs, 4+\trs+\rs);

\node[below right] at (0, 4) {$K$};
\node[above left] at (5, 4) {$UK$};
\draw (5, 0) -- (5, -\s);
\draw (-1, 0) -- (-1, -\s);
\node[below] at (2, 0) {$W^u(q)$};
\draw[->] (1, -\s/2) -- (-0.9, -\s/2);
\draw[->] (3, -\s/2) -- (4.9, -\s/2);

\node[above] at (2, 4+\trs) {$\tilde R$};
\node[below right] at (-\trs-\rs, 4+\trs+\rs) {$R$};
\node[above] at (4, 4+\trs+0.5*\rs) {$W^s(p_0)$};
\node[below] at (0, -\trs-0.5*\rs) {$W^s(p_0)$};

\node [draw, circle, fill, inner sep=0pt, minimum size=0.1cm] at (5+2*\trs, 3) {};
\node[below left] at (5+2*\trs, 3) {$q$};

\end{tikzpicture}
\vspace{-15mm}
\end{center}
\caption{The ``frame'' $R$.}
\label{f:R}
\end{figure}

\section{The Baire genericity argument} \label{s:Baire}
Instead of constructing an explicit example of a diffeomorphism with required properties, we will consider a special class of diffeomorphisms of the torus and prove that a generic map in this class can serve as our example. 
Let us assume that $\FI$, $\FL$, $\UK$, $UH_i$, $UV_i$, $S$, $p_0$, $p_1$, $q$, $R$ are fixed as in section~\ref{s:construction}. Consider any sufficiently large number $L \in \mathbb N$ (we will specify below how large it should be). {\bf The auxiliary class} $\CO$ {\bf of homeomorphisms} consists of homeomorphisms $H : \TT \to \TT$ such that 
\begin{enumerate}
\item \label{i:S} $H|_S=\FI|_S$;
\item \label{i:skew} $H$ preserves the vertical foliation;
\item \label{i:bilip} $H$ is $L$-bi-Lipschitz;
\item \label{i:lin} $H = F_{Lin}$ outside $R$;
\item \label{i:expand} The restriction of $H$ to any vertical fiber expands (nonuniformly) with a factor at least $1.1$ and at most $L$;
\item \label{i:to-D} $H(\intt \UK) \cap \intt \UK = \intt UV_0 \cup \intt UV_1$;
\item \label{i:dh-1} $H^{-L} = \FI^{-L}$ on $\dh \UK$.
\end{enumerate}
\noindent Recall that for the map $\FI$ the boundary $\dv \UK$ is contained in the unstable fiber of the point~$q$ and $\dh R$ is in the stable fiber of the point~$p_0$. Moreover, $\dh \UK$ contains the point $p_0$ together with some segment of its stable fiber. Let us choose the number $L$ to be so large that
\begin{enumerate}[i]
\item \label{i:v-vodu-1} The set $\dv \UK$ is contained in a ball in the vertical fiber of the point $q$ with radius $L$ and center at $q$; 
\item \label{i:v-vodu-2} The ball of radius $L \times 1.1^{-L}$ centered at $q$ lies outside $R$;
\item\label{i:dh-2} $\dh R \subset \dh \FI^{-L}(\UK)$;
\item $\FI$ is $L/2$-bi-Lipschitz.
\end{enumerate}
Let us introduce several properties of maps from the class~$\CO$ that follow from the ones above.
\begin{enumerate}
\setcounter{enumi}{7}
\item \label{i:v-vodu} For $N \ge L$ one has $H^{-N}(\dv \UK) \subset \LL$.
\end{enumerate}

\begin{proof} Since $H = \FL$ outside $R$, the point $q$ is a fixed point of~$H$. It follows from properties~\ref{i:v-vodu-1} and~\ref{i:expand} that $H^{-N} \dv UK$ is contained in the ball in the vertical fiber of $q$ with radius $L \times 1.1^{-N}$ and center at $q$. By property~\ref{i:v-vodu-2} this ball is contained in~$\LL$.
\end{proof}

\begin{enumerate}
\setcounter{enumi}{8}
\item \label{i:dh} $\dh R \subset \dh H^{-L}(\UK)$.
\end{enumerate}
\begin{proof}
This follows straightforwardly from properties~\ref{i:dh-1} and~\ref{i:dh-2}.
\end{proof}

Let us now define {\bf the special class of diffeomorphisms} $\CO^1$.
In order to do that, first choose a small $\di \in (0, 0.1)$ such that 
\begin{enumerate}
\item 
the inequality $\dist_{C^1}(G, \FI) \le \di$ implies that $G$ is conjugate to $\FI$;

\item
for any point $x$ for any linear operator $A: \RR^2 \to \RR^2$ that is $\di$-close to $d_x \FI$ one has:
\begin{equation} \label{e:di}
||A|| < L, \; \|A^{-1}\| < L, \; A(C_H) \supset C_H.
\end{equation}
\end{enumerate}

It is obvious that $\FI \in \CO$. Now we can define the special class of diffeomorphisms $\CO^1$ to be the intersection of $\CO$ with the closed $C^1$-ball of radius $\di$ centered at $\FI$.

We will need three auxiliary statements to prove Theorem~\ref{t:thm}.

\begin{prop}
The space $\CI$ is a nonempty complete metric space.
\end{prop}
\begin{proof}
$\CI$ is a closed subspace of a complete metric space $\mathrm{Diff}^1(\TT)$, therefore it is complete. It is nonempty because it contains $\FI$.
\end{proof}

\noindent Given $F \in \CO$, denote by $B(F) = \cup_{n\ge 0} F^{-n}(S)$ the basin of attraction of the semi-thick horseshoe~$\HS$. For $\eps > 0$ define the set $A_\eps \subset \CO$ as the set of all $F \in \CO$ such that $\leb(B(F)) > 1-\eps$, where $\leb$ is the Lebesgue measure on~$\TT$. Also, let $A_\eps^1 = A_\eps \cap \CI$.

\begin{lem} \label{l:open}
For any $\eps>0$ the set $A_\eps$ is open in the $\Homeo$-topology on $\CO$.
\end{lem}

\begin{lem} \label{l:dense}
For any $\eps>0$ the set $A^1_\eps$ is dense in $\CI$ (in the $C^1$-topology).
\end{lem}

We will prove these two lemmas below in sections~\ref{s:open} and~\ref{s:dense} respectively. Let us deduce Theorem~\ref{t:thm} from them.

\begin{proof}[Proof of Theorem~\ref{t:thm}]
Consider the set $A^1_0 := \bigcap_{n = 1}^\infty A^1_{1/n}$. By Lemmas~\ref{l:open} and~\ref{l:dense} the set $A^1_0$ is a residual subset of $\CI$. By the Baire theorem it is non-empty. Let us take as $\FH$ any diffeomorphism in~$A^1_0$. It follows from the definition of $A^1_0$ that $\leb(B(\FH)) = 1$.

Let us show that $\FH$ satisfies the conclusion of Theorem~\ref{t:thm}. The physical measure $\nu$ is obtained as the image of the $(1/2, 1/2)$-Bernoulli measure under the symbolic encoding map $\chi: \{0, 1\}^{\mathbb Z} \to \HS$ for the horseshoe. Obviously, the support of $\nu$ is $\HS$. Denote the basin of the measure~$\nu$ by $Bas$. Let us prove that $\nu(Bas)=1$, i.e., that for a $\nu$-a.e. point $x \in \HS$ the sequence $\delta^n_x$ will $\ast$-weakly converge to~$\nu$. This statement is equivalent to the analogous statement for the Bernoulli shift and the $(1/2, 1/2)$-Bernoulli measure. That latter statement follows from the Birkhoff theorem. 

Let $\leb_S = \leb|_S/\leb(S)$ be the probability Lebesgue measure on $S$ and $\pi: S \to \CT$ be the projection onto the vertical axis. From the construction of $\CT$ (see Proposition~\ref{p:equal_measures} in section~\ref{s:FB}) it follows that $\pi_* \nu = \pi_* \leb_S$. 
If the set $Bas \cap S$ intersects some horizontal fiber of $S$, it contains this whole fiber, because the distance between the images of any two points on this fiber tends to zero when we iterate the map. Therefore, 
\[
\leb_S(Bas \cap S) = (\pi_* \leb_S)(\pi(Bas \cap S)) = (\pi_* \nu)(\pi(Bas \cap S)) = \nu(Bas \cap S) = 1. 
\]

We showed that the basin of $\nu$ contains almost all points of~$S$. Iterating backwards, we conclude that the basin of $\nu$ contains almost every point of $\FH^{-n}(S)$ for any~$n$, and therefore almost every point of~$B(\FH)$. 
Since $\leb(B(\FH)) = 1$, this means that that the basin of~$\nu$ has full Lebesgue measure.

Let us prove that $\omega(x) = \HS$ for almost any $x$. Indeed, for any $x$ in the basin of $\nu$ we have $\omega(x) \supset \HS$, while for any $x$ in $B(\FH)$ we have $\omega(x) \subset \HS$.
\end{proof}
\section{\texorpdfstring{$A_\eps$}{A-eps} is open} \label{s:open}
In this section we will prove lemma~\ref{l:open}.

\begin{prop}\label{prop:ineqlip}
Suppose that $F\colon \TT\to\TT$ is a Lipschitz map. Then for any Borel subset $A\subset \TT$ one has $\mu(F(A)) \le \mathrm{Lip}^2(F)\cdot\mu(A)$. 
\end{prop}

\begin{proof}
For any Borel set $B\subset\TT$ the diameter of $F(B)$ is at most $\mathrm{Lip}(F)\cdot\mathrm{diam}(B)$.
Application of this inequality to the elements of any cover (from the definition of Hausdorff measure) of an arbitrary Borel set $A\subset\TT$ yields that for the two-dimensional Hausdorff measure $H_2$ the following inequality holds:
$$H_2(F(A)) \le \mathrm{Lip}^2(F)\cdot H_2(A).$$
Now it suffices to observe that the Lebesgue measure $\mu$ differs from the Hausdorff measure $H_2$ by a constant factor.  
\end{proof}

\begin{rem}\label{rem1}
For any $\hat \delta > 0$ the set $S$ can be covered by a union $\hat S = \hat S(\hat \delta)$ of finitely many rectangles in such a way that $\mu(\hat S\setminus S) < \hat \delta$. Indeed, $S$ is a Cartesian product of a ``vertical'' thick Cantor set $C_{thick}$ and a ``horizontal'' segment.  A finite cover of $C_{thick}$ by segments is obtained when we take the convex hull of $C_{thick}$ and remove finitely many intervals of its complement starting from the largest ones. Then we can obtain the required cover of $S$ by multiplying this cover of $C_{thick}$ by a horizontal segment.
\end{rem}

Consider an arbitrary homeomorphism $F\in A_\eps$. By the definition of $A_\eps$, we have ${\mu(B(F)) > 1-\eps}$. To prove Lemma~\ref{l:open}, we will show that if $G\in\CO$ is sufficiently close to $F$ in the $\Homeo$-metric, then $\mu(B(G)) > 1-\eps$.

Let $\delta = \mu(B(F)) - (1-\eps) > 0$.
Recall that $B(F) = \cup_{j = 0}^\infty F^{-j}(S)$.
The sets $F^{-j}(S)$ form an ascending sequence (by inclusion), therefore there is~$N\in\NN$ such that
\begin{equation}\label{ineq:1}
\mu\left(F^{-N}(S)\right) > \mu(B(F))-\delta/3.
\end{equation}

It suffices to prove that if $F$ and $G$ are sufficiently close to each other, then 
\begin{equation}\label{ineq:neweq}
\mu\left(G^{-N}(S)\right) > \mu\left(F^{-N}(S)\right) - 2\delta/3.
\end{equation}

Indeed, with \eqref{ineq:1} and \eqref{ineq:neweq} we could write the following estimate:
$$\mu(B(G)) \; \ge \; \mu(G^{-N}(S)) \; \overset{(\ref{ineq:neweq})}{>} \; \mu(F^{-N}(S)) - 2\delta/3 \; \overset{(\ref{ineq:1})}{>} \; \mu(B(F))-\delta \; = \; 1 -\eps,$$
which yields $G \in A_\eps$.

To prove inequality~\eqref{ineq:neweq}, we will need several auxiliary propositions.

\begin{prop} \label{p:open-approx}

For a fixed $N$ there is $\hat \delta > 0$ such that for any $G\in\CO$ one has

\[
 \mu\left(G^{-N}(S)\right) > \mu\left(G^{-N}({\hat S}(\hat \delta))\right) - \delta/3.
\]
\end{prop}
\begin{proof}
Recall that $S \subset \hat S$ (see Remark~\ref{rem1}) and that the homeomorphism $G$ is bi-Lipschitz with a constant $L$. Proposition~\ref{prop:ineqlip} yields the following inequality:
$$\mu(G^{-N}(\hat{S}\setminus S)) \le L^{2N}\cdot\mu(\hat{S}\setminus S) \le L^{2N}\cdot \hat \delta.$$

It suffices to take $\hat\delta < \frac{\delta}{3L^{2N}}$.
\end{proof}

\begin{prop} \label{p:open-open}
Let $F\in\CO$, $\hat S$ and $N$ be fixed. Then for any homeomorphism $G\in\CO$ that is sufficiently close to $F$ in the $\Homeo$-metric the following inequality holds
\[
\mu\left(G^{-N}(\hat S)\right) > \mu\left(F^{-N}(\hat S)\right)-\delta/3.
\]
\end{prop}

\begin{proof}

Let $\Pi$ be an arbitrary rectangle of the cover $\hat S$. Consider the set $F^{N}\circ G^{-N}(\Pi)$. This set is a topological disk whose boundary is close to the one of the rectangle $\Pi$ if $G$ is close to $F$. Therefore the measure of the symmetric difference $(F^{N}\circ G^{-N}(\Pi))\; \triangle\; \Pi$ is less than an arbitrary $\eps_1>0$, provided that $F$ and $G$ are sufficiently close. Application of Proposition~\ref{prop:ineqlip} yields 
\[
\mu\big(G^{-N}(\Pi)\; \triangle \;  F^{-N}(\Pi)\big) = \mu \big(F^{-N}\big(F^N\circ G^{-N}(\Pi) \; \triangle \; \Pi\big) \big) \le L^{2N}\cdot \eps_1.
\]
Therefore
\[
\mu(G^{-N}(\Pi)) \ge \mu(F^{-N}(\Pi)) - L^{2N}\cdot \eps_1.
\]
So, if $F$ and $G$ are sufficiently close, this inequality holds for any rectangle of the cover $\hat S$. Denote the number of rectangles in the cover $\hat S$ by $P$. Then we have
$$\mu(G^{-N}(\hat S)) \ge \mu(F^{-N}(\hat S)) - P \cdot L^{2N} \cdot \eps_1.$$
If $\eps_1$ is small enough, we have obtained the required inequality.
\end{proof}

To sum up, if $F$ and $G$ are sufficiently close to each other, we have:

$$\mu\left(G^{-N}(S)\right) \overset{\mbox{П.}\ref{p:open-approx}}{>} \mu\left(G^{-N}(\hat S)\right) - \delta/3 \overset{\mbox{П.}\ref{p:open-open}}{>} \mu\left(F^{-N}(\hat S)\right) - 2\delta/3 > \mu\left(F^{-N}(S)\right) - 2\delta/3.$$

Therefore, we have obtained inequality~\eqref{ineq:neweq}, which finishes the proof of Lemma~\ref{l:open}.

\section{\texorpdfstring{$A^1_\eps$}{A-1-eps} is dense} \label{s:dense}
\subsection{Segments of level~\texorpdfstring{$n$}{n}}
This section is devoted to the proof of Lemma~\ref{l:dense}. We begin with the following important definition.
\begin{defin} \label{d:segments}
For $F \in \CO$  \emph{segments of level $0$} are the connected components of the intersections of~$\DD$ with the vertical fibers. \emph{Segments of level~$n$} are their $n$-th preimages. 
\end{defin}
Let us give the plan of the proof of Lemma~\ref{l:dense} first. Given a map $F_0 \in \CO^1$, we want to obtain a map $F \in A_\eps \cap \CO^1$ by a small perturbation. 
First we perform the vertical linearization (see. section~\ref{s:lin} below) and obtain a map $\FPL \in \CO$ which is linear in restriction to all segments of levels greater than $N$ for some fixed large~$N$. In section~\ref{s:density-point} we use a well-known argument with the Lebesgue density point theorem combined with distortion control to show that for this new map $\Leb(B(\FPL)) = 1$. Thus $\FPL \in A_\eps$. Unfortunately, the map $\FPL$ is not in the class $\CO^1$ anymore. Therefore in section~\ref{s:smoothing} we smooth this map via a second perturbation to obtain the required~$F$. This smoothing perturbation can be made arbitrarily small in the $\Homeo$-topology, which thanks to openness of $A_\eps$ yields that $F \in A_\eps$. Finally, we check that $F$ turns out to be $C^1$-close to $F_0$ provided that $N$ is large.

\subsection{The density point argument} \label{s:density-point}
In this section we prove the following lemma.
\begin{lem} \label{l:N-linear}
Suppose that the map $H \in \CO$ is linear in restriction to all $H$-segments of level greater than $N$ for some $N \in \NN$. Then $Leb(B(H)) = 1$. 
\end{lem}
\noindent For this end we will need two auxiliary statements.
\begin{prop} \label{p:toD}
For any map $H \in \CO$ and any point $x \in \TT \setminus S$ there exists an integer $m \ge 0$ such that $H^m(x) \in \DD$.
\end{prop}
\begin{proof}
If $x\in\DD$, there is nothing to prove. Suppose that $x \in \UK$. Then $x$ lies in some connected component $\Pi$ of the set $\UK \setminus S$. This connected component is a horizontal stripe. Let $\Pi_0$ be the middle stripe of $\UK \setminus S$. The dynamics on the horseshoe $\HS$ is the same as for the standard Smale horseshoe, so there exists $n \ge 0$ such that $\FI^n(\Pi) \subset \Pi_0$. The stripe $\Pi$ is bounded by two horizontal segments of $\dh \Pi$ and two vertical segments of~$\dv \Pi$. Since $\dh \Pi \subset S$, it follows from the property~\ref{i:S} of the class~$\CO$ that $H$ and $\FI$ coincide on~$\dh \Pi$. Since the vertical foliation is invariant for both maps, we have $H^n(\Pi) = \FI^n(\Pi) \subset \Pi_0$.

By condition~\ref{i:to-D} from the definition of~$\CO$ we have $H(\Pi_0) \subset \DD$. Therefore, $H^{n+1}(\Pi) \subset \DD$ and, in particular, $H^{n+1}(x) \in \DD$. 
\end{proof}

\begin{cor}\label{c:SorD}
For any $x\in\TT$ and any $H\in\CO$, if the positive orbit of~$x$ does not end up in~$S$, then it visits~$\DD$ infinitely many times.
\end{cor}

\begin{proof}[Proof of Lemma~\ref{l:N-linear}]
Assume the contrary. Then the set $\Dop = \TT\setminus B(H)$ has positive measure and, therefore, its intersection with some vertical segment has a density point with respect to the Lebesgue measure on this segment. Denote this point by $x$. Recall that $\TT \setminus \UK$ is the union of vertical $H$-segments of level 0. By Corollary~\ref{c:SorD} the positive orbit of $x$ visits $\DD$ infinitely many times. Hence, $x$ belongs to $H$-segments of arbitrary large levels. Since $H^{-1}$ contracts the vertical segments (property~\ref{i:expand} of the class~$\CO$), the lengths of these segments tend to zero, because the lengths of segments of level 0 are uniformly bounded. Since $x$ is a density point, the density of the set $\Dop$ in these segments is arbitrarily close to one. Let us deduce a contradiction from this.

Since $H$ is, by assumption, linear in restriction to $H$-segments of large levels, our segment with high density of $\Dop$ linearly expands to a segment of level~$N$. Then it takes $N$ iterations to non-linearly expand it to a segment of level 0. Since the distortion during these $N$ iterations is bounded (this follows from properties~\ref{i:expand} and~\ref{i:bilip} of the class~$\CO$), we may assume that inside the segment of level 0 we still have density of $\Dop$ close to one. Finally, there exists $M\in\NN$ such that for any segment $I$ of level 0 the image $H^M(I)$ cuts all the way trough $\intt \UK$. Such $M$ exists because each fiber of the vertical foliation is an irrational winding of $\TT$ and $H$ uniformly expands the segments of vertical fibers. Then inside $H^M(I)$ the relative measure of the points that belong to $S$ is at least $\frac{\mu(S)}{|H^M(I)|}$, where $\mu(S)$ is the one-dimensional Lebesgue measure of the projection of $S$ to a vertical fiber. Since  the distortion during these $M$ iterations is bounded, the fraction of points of $\Dop$ inside $H^M(I)$ can be arbitrarily close to one. Since $\Dop\cap S = \emptyset$, this leads to a contradiction.
\end{proof}

\subsection{Linearization} \label{s:lin}
In this section for given $F_0 \in \CI$ and (large) $N \in \NN$ we will construct a homeomorphism $\FPL \in \CO$ that is linear in restriction to every $\FPL$-segment of level greater than~$N$ and coincides with $F_0$ outside the union of these segments.

Consider $H \in \CO$ and let $J$ be an $H$-segment of level~$n$. We define \emph{the linearization procedure} on $J$ as follows: the restriction $H|_J$ is replaced by an affine map such that the image of $J$ stays the same.
To obtain the map $\FPL$, we first linearize $F_0$ on all $F_0$-segments of level~$N+1$. This yields a map $F_{N+1}\in\CO$ that is linear on all $F_{N+1}$-segments of level~$N+1$. Then we linearize $F_{N+1}$ on all $F_{N+1}$-segments of level~$N+2$ and obtain a map $F_{N+2}$ linear on all $F_{N+2}$-segments of level~$N+2$. We will prove that $F_{N+2}$ is also linear on all $F_{N+2}$-segments of level~$N+1$. Continuing this process, we will obtain a sequence of maps and then show that it $C^0$-converges to the required map~$\FPL$.

\subsubsection{Stripes of level \texorpdfstring{$n$}{n}}
Consider a homeomorphism $H \in \CO$ and an integer $n>L$. Recall that $H$-segments of level~$0$ are the connected components of vertical fibers inside $\TT \setminus \UK$, whereas segments of level~$n$ are the $n$-th preimages of segments of level zero. Thus, segments of level $n$ are the connected components of vertical fibers inside $\TT \setminus H^{-n}(\UK)$. Denote $W_m := \dh H^{-m}(\UK)$. Then we have $W_m = H^{-m}(\dh UK)$, where $\dh UK$ consists of two ``local stable manifolds'' of the two fixed points $p_0$ and $p_1$ of our horseshoe. The set $W_m$ consists of two curves $W_m^0$ and $W_m^1$ which are two longer pieces of the ``stable manifolds'' of $p_0$ and $p_1$. It is easy to check that the curves $W_m^0$ and $W_m^1$ locally are the graphs of (not necessarily smooth) functions from the horizontal axis to the vertical one. Indeed, this is obvious for $W_0$ and is checked inductively for larger~$m$, using that the vertical foliation is preserved. Since $H(\dh \UK) \subset \dh \UK$, for any $m \ge 0$ we have 
\[
W_{m+1} \supset W_m.
\]
Since $n>L$, it follows from property~\ref{i:dh} of class~$\CO$ that 
\begin{equation} \label{e:h}
\dh R \subset W_n.
\end{equation}
Recall also property~\ref{i:v-vodu}:
\begin{equation} \label{e:v}
\dv H^{-n}(\UK) \cap R = \emptyset.
\end{equation}

Since outside $R$ the map $H$ is already linear (property~\ref{i:lin}), we are interested only in segments of level~$n$ that intersect $\intt R$. It follows from~\eqref{e:h} that such segments are included in $R$. Properties~\eqref{e:h} and~\eqref{e:v} imply that the set $W_n$ splits $R$ into stripes that are more or less horizontal. Let us call those stripes that are included in $F^{-n}(\DD)$ \emph{stripes of level~$n$}.
All this gives us the following statement.
\begin{prop} \label{p:stripes}
For any $n>L$ and $H \in \CO$ the union of all segments of level~$n$ that intersect~$\intt R$ equals the union of all stripes of level~$n$.
\end{prop}

\subsubsection{Properties of the stripes of level~\texorpdfstring{$n$}{n}}

\begin{prop} \label{p:Markov}
Let $\Pi_l$ be a stripe of level $l>L$ and $\Pi_k$ be a stripe of level~$k>l$. Then either $\Pi_k \subset \Pi_l$ or the closures of $\Pi_k$ and $\Pi_l$ are disjoint. 
\end{prop}
\begin{proof}
Suppose that $\Pi_k \not \subset \Pi_l$. Let us show first that the interiors of the two stripes are disjoint. Suppose the contrary: $\intt \Pi_k \cap \intt \Pi_l \ne \emptyset$. Then $\dh \Pi_l$ intersects $\intt \Pi_k$. But $\dh \Pi_l \subset W_l \subset W_k$. Hence $W_k$ intersects $\intt \Pi_k$, which contradicts the definition of the stripes of level~$k$.

Now, if the boundaries of the stripes intersect, then, since $\Pi_k \not \subset \Pi_l$, the upper boundary of $\Pi_k$ intersects the lower boundary of $\Pi_l$ (or vice versa, and in this case the argument is analogous). Let assume that $p_0$ is the upper one of the two fixed points of the horseshoe; then the aforementioned boundaries are included into the curves $W_k^0$ and $W_l^1$, respectively. But these curves do not intersect, because they are the pieces of the stable sets of the points $p_0$ and $p_1$. This contradiction finishes the proof.  
\end{proof}\noindent Given $H \in \CO$ and $n>L$, denote by $\Lcal_n(H)$ the map obtained from $H$ by linearization on all segments of level~$n$.
\begin{prop} \label{p:stripes-preserved}
Let $k>L$ and $H \in \CO$. Then for every $l \in (L, k]$ the maps $H$ and $\Lcal_k(H)$ have the same stripes of level~$l$, and $W_k(\Lcal_k(H)) = W_k(H)$.
\end{prop}
\begin{proof}
Since the interiors of the stripes of level $k$ do not cross $W_k(H)$, we have $H|_{W_k(H)} = \Lcal_k(H)|_{W_k(H)}$. So, if we apply $\Lcal_k(H)$ to $W_k(H)$ $k$ times, we get the horizontal boundary of $UK$. Then we must have that $W_k(H)=W_k(\Lcal_k(H))$. 
Since $W_l(H) \subset W_k(H)$, we have $W_l(H) = W_l(\Lcal_k(H))$. This and preservation of the vertical foliation implies that $H^{-l}(\UK)=\Lcal_k(H)^{-l}(\UK)$. Thus $\Lcal_k(H)$ has the same stripes of level~$l$ as~$H$.  
\end{proof}

\subsubsection{Roughly horizontal stripes}
We will show that for the maps $F_i$, introduced in the beginning of section~\ref{s:lin}, all $F_i$-stripes will be \emph{roughly horizontal} in the following sense. Consider the coordinates $(x, y)$ on $R$: the axis $y$ is vertical, and the axis $x$ is horizontal. Denote by $J_h$ and $J_v$ the projections of $R$ to the horizontal and the vertical axes.
\begin{defin}
 Consider two $C^1$-smooth functions $\varphi_1, \varphi_2 \colon J_h \to J_v$ such that for every $x\in J_h$
\begin{itemize}
\item $(x, \varphi_1(x)) \in R$, $(x, \varphi_2(x)) \in R$;
\item $|\varphi_1'(x)|<1$, $|\varphi_2'(x)|<1$;
\item $\varphi_1(x) < \varphi_2(x)$. 
\end{itemize}
\emph{A roughly horizontal stripe} is a set of all points of $R$ that lie between the graphs of such two functions. The segments $x \times [\varphi_1(x), \varphi_2(x)]$ are called \emph{the vertical segments} of this stripe.
\end{defin}
\noindent  Consider a roughly horizontal stripe~$\Pi$. Given two smooth maps $G, H: \Pi \to \TT$ we denote
\[
\dist_\Pi(G, H) = \sup_{x \in \Pi} \|dG(x) - dH(x)\|.
\]
For $H \in \CO$ denote by $\Lcal_\Pi(H): \Pi \to H(\Pi)$ the map obtained from $H$ after linearization on all vertical segments of the stripe~$\Pi$. It is easy to see that $\Lcal_\Pi(H)$ is $C^1$-smooth, since the horizontal boundary of the stripe is defined by $C^1$-smooth functions. For a roughly horizontal stripe $\Pi$ and $H \in \CI$, we can estimate how close $\Lcal_\Pi(H)$ is to~$H$, as the following lemma claims.
\begin{lem}\label{l:delta}
Consider an arbitrary diffeomorphism $F_0 \in \CI$ and any roughly horizontal stripe~$\Pi$. Suppose that, for some $\delta > 0$, for any two points $p, q \in \Pi$ on the same vertical fiber one has
$\|dF_0(p) - dF_0(q)\| < \delta.$
Then $\dist_\Pi(F_0, \Lcal_\Pi(F_0)) < \sqrt{5}\delta$.
\end{lem}
\noindent This lemma will be proved in section~\ref{s:technical}.

\begin{cor}\label{cor:delta}
For any $F_0 \in \CI$ and $\delta>0$, for any sufficiently thin roughly horizontal stripe~$\Pi$ (in particular, for an $H$-stripe of sufficiently large level for arbitrary $H\in\CO$) the following holds:
$\dist_{C^1}(F_0|_\Pi, \Lcal_\Pi(F_0)) < \delta$.
\end{cor}
\begin{proof}
Since the stripe is thin, the derivative $dF_0$ has small oscillation on its vertical segments, because of the uniform continuity, so the previous lemma is applicable (with $\delta$ replaced with $\delta/2\sqrt{5}$). The lemma yields that $dF_0$ and $d\Lcal_\Pi(F_0)$ are $\delta/2$-close. Moreover, the $C^0$-closeness follows automatically, because the stripe is thin. Finally, we have the required $C^1$-closeness.
\end{proof}

\subsubsection{Induction} \label{s:induction}
In the beginning of section~\ref{s:lin} for a given $F_0 \in \CI$ and $N > L$ we defined the sequence $(F_i)_{i=N}^\infty$ in the following way: $F_N = F_0$, and $F_{i+1}=\Lcal_{i+1}(F_i)$ for $i \ge N$. For $k > N$ and $H \in \CO$, let us call the $H$-stripe of level $k$ \emph{dependent} if it is included into another $H$-stripe of level in range from $N+1$ to $k-1$. Let us call the stripe \emph{independent} otherwise. By Proposition~\ref{p:Markov}, the interiors of different independent stripes are disjoint.
\begin{prop} \label{p:induction}
For any $F_0 \in \CI$ such that $\dist_{C^1}(F_0, \FI) < \di$ there exists a number~$N_0 \in \NN$ such that for any $N>N_0$ the following holds for the sequence $(F_i)_{i=N}^\infty$ constructed above. For any $i \ge N$:
\begin{enumerate}
\item \label{ii:CO} $F_i \in \CO$; 
\item \label{ii:coincide} The maps $F_i$ and $F_{i-1}$ have the same stripes of levels from $N+1$ to $i$;
\item \label{ii:Pi-j} $F_i = F_0$ outside the (finite) union $\bigsqcup_j \Pi_j$ of all independent $F_i$-stripes of levels from $N+1$ to $i$; 
\item \label{ii:Lcal} For any $j$ we have $F_i|_{\Pi_j} = \Lcal_{\Pi_j}(F_0)$; 
\item \label{ii:close} For any $j$ the restriction of $F_i$ to $\Pi_j$ is smooth and $\di$-close to $\FI$ in the sense of the $\dist_{\Pi_j}$-metric;
\item \label{ii:roughly} For any $m \ge 0$ the curves $W_m^0(F_i)$ and $W_m^1(F_i)$ are smooth, and their tangent vectors are contained in the cones~$C_H$ (therefore, for every $m>N$ all $F_i$-stripes of level~$m$ are roughly horizontal).
\end{enumerate}
\end{prop}

\begin{proof}
Choose $N_0>L$ such that for any $n>N_0$, for our $F_0$ and any map $H \in \CO$, for any $H$-stripe $\Pi$ of level $n$ one would have
\begin{equation} \label{e:close}
\dist_\Pi(\Lcal_\Pi(F_0), \FI) < \di.
\end{equation}
Let us prove that this is possible. Since $\dist_{C^1}(F_0, \FI) < \di$, we have $\dist_\Pi(F_0, \FI) < \di$. Define the \emph{width} of the stripe as the maximal length of its vertical segments. Choose $h$ so that for any roughly horizontal stripe $\Pi$ of width at most $h$ we had $\dist_\Pi(\Lcal_\Pi(F_0), F_0) < \di - \dist_\Pi(F_0, \FI)$. Then, by the triangle inequality, for any such stripe we have~\eqref{e:close}. Now, let $N_0$ be so large that for any $n>N_0$ the width of any level $n$ stripe $\Pi$ of any map $H \in \CO$ be less than~$h$. This is possible by property~\ref{i:expand} of the class~$\CO$.   

Now we will prove the required properties using induction in~$i$. 
For $i=N$ we have $F_N = F_0$, so there are no stripes $\Pi_j$ at all. Thus, we only need to prove the statement about the cones in~\ref{ii:roughly}. The curves $W_0^{0, 1}$ are strictly horizontal. Since $dF_0^{-1}(C_H) \subset C_H$, the tangent vectors to the curves $W_m^{0, 1}  = F_0^{-m}(W_0^{0, 1})$ lie in the cones~$C_H$.

Assume that for some $i \ge N$ all the properties are established. Let us prove that they hold for~$i+1$. The map $F_{i+1}$ is obtained from $F_i$ by linearization on all $F_i$-stripes of level~$i+1$. Since on all dependent stripes the restriction of $F_i$ to any vertical segment is already linear, linearization does not change the map on these stripes. Therefore \emph{$F_{i+1}$ is obtained from $F_i$ by linearization on all independent $F_i$-stripes of level~$i+1$.} Since on the vertical boundaries of these stripes the map is already linear by property~\ref{i:lin} of the class $\CO$ (because these boundaries are in $\dv R$), linearization does not create discontinuities on the vertical boundaries of the stripes, and so the map $F_{i+1}$ will be a homeomorphism.

It follows from the choice of~$N_0$ (see inequality~\eqref{e:close}) that on any independent $F_i$-stripe of level~$i+1$ the map $F_{i+1}$ is $\di$-close to $\FI$ in the $\dist_\Pi$-metric. Therefore property~\eqref{e:di} (from section~\ref{s:Baire}) implies that inside every such stripe the homeomorphism $F_{i+1}$ is $L$-bi-Lipschitz. Outside the union of those stripes the map $F_{i+1}$ coincides with $F_i$ and is bi-Lipschitz by the induction assumption. The triangle inequality allows to deduce now that $F_{i+1}$ is bi-Lipschitz on the whole $\TT$. Note that in this argument it is irrelevant whether we include the boundary of the stripes under consideration into the stripes themselves or into their complement.
 
Let us check that $F_{i+1}$ satisfies the rest of the properties of the class~$\CO$ (provided that $F_i$ does). 

\begin{enumerate}
\item $F_{i+1}|_S=\FI|_S$.

Note that (open) $F_i$-stripes of level~$i+1$ do not intersect $S$. Indeed, $F_i(S) \subset S$, so $S \subset F_i^{-(i+1)}(S)$, but since $S\subset\UK$, we have $S\subset F_i^{-(i+1)}(S) \subset F_i^{-(i+1)}(\UK)$, but the stripes are in the complement of the latter set by definition. Thus $F_{i+1}|_S = F_i|_S = \FI|_S$.

\item $F_{i+1}$ preserves the vertical foliation. 

This follows from the fact that our linearization on the vertical fibers does not change them.

\item $H$ is $L$-bi-Lipschitz.

We have already proved that.

\item $F_{i+1} = F_{Lin}$ outside $R$.

Since all stripes are inside $R$, outside $R$ we have $F_{i+1}=F_i=\FL$.

\item The restriction of $F_{i+1}$ to any vertical fiber dilates by a factor at least $1.1$ and at most $L$.

When we linearize $F_i$ on a vertical segment $J$, the map is replaced by the linear one which dilates $J$ so that the ratio of the length of $J$ and its image is the same as for $F_i$. Therefore the maximal dilation cannot become stronger and the minimal cannot become weaker. 

\item $F_{i+1}(\intt \UK) \cap \intt \UK = \intt UV_0 \cup \intt UV_1$.

Since $\dh \UK = W_0 \subset W_{i+1}$, the set $\dh \UK$ does not intersect the interiors of the $F_i$-stripes of level~$i+1$. Therefore, $F_{i+1}(\dh \UK) = F_i(\dh \UK)$. Since $\UK$ consists of the segments of vertical fibers in-between its upper and lower boundaries and the images of these vertical fibers are vertical fibers as well, we get $F_{i+1}(\UK) = F_i(\UK)$. Therefore, $F_{i+1}(\intt \UK) \cap \intt \UK = F_i(\intt \UK) \cap \intt \UK = \intt UV_0 \cup \intt UV_1$.
The last equality holds by the induction assumption.

\item $F_{i+1}^{-L} = \FI^{-L}$ on $\dh \UK$.

$F_i^{-L}(\dh \UK) = W_L(F_i) \subset W_{i+1}(F_i).$ This subset does not intersect the interior of the $F_i$-stripes of level~$i+1$, so $F_{i+1}=F_i$ on $W_L(F_i)$. Since $W_L$ is forward-invariant, $F_{i+1}^L=F_i^L$ in restriction to this set. Thus, $F^L_{i+1}(W_L) = \dh \UK$, and $F^{-L}_{i+1}=F^{-L}_i=\FI^{-L}$ on $\dh \UK$.

\end{enumerate}

So, we proved that $F_{i+1} \in \CO$. Let us prove the rest of the claims.
\begin{enumerate}
\setcounter{enumi}{1}

\item The maps $F_i$ and $F_{i+1}$ have the same stripes of levels from $N+1$ to $i+1$.

This follows from Proposition~\ref{p:stripes-preserved}.

\item $F_{i+1} = F_0$ outside the (finite) union $\bigsqcup_j \Pi_j$ of all independent $F_i$-stripes of levels from $N+1$ to~$i$; 

By the induction assumption, outside the union of the independent stripes of levels from $N+1$ to $i$ we have $F_i=F_0$. Since $F_{i+1}$ is obtained by linearizing $F_i$ on all independent $F_i$-stripes of level~$i+1$, we have the required statement.

\item For any $j$ one has $F_{i+1}|_{\Pi_j} = \Lcal_{\Pi_j}(F_0)$; 

For the stripes of levels from $N+1$ to $i$ this follows from the induction assumption. If $\Pi_j$ is an independent stripe of level $i+1$, then $F_i|_{\Pi_j} = F_0|_{\Pi_j}$, so $F_{i+1}|_{\Pi_j} = \Lcal_{\Pi_j}(F_i) = \Lcal_{\Pi_j}(F_0)$.

\item For any $j$ the restriction of $F_{i+1}$ to $\Pi_j$ is smooth and $\di$-close to $\FI$ in the sense of the $\dist_{\Pi_j}$-metric; 

This follows from the previous claim (claim~\ref{ii:Lcal}) and inequality~\eqref{e:close}.

\item For any $m \ge 0$ the curves $W_m^0(F_i)$ and $W_m^1(F_i)$ are smooth and their tangent vectors always lie in the cones $C_H$ (therefore, for any $m>L$ all $F_i$-stripes of level~$m$ are roughly horizontal).

For arbitrary $j$, let us write $W_j$ and $W^{0,1}_j$ instead of $W_j(F_{i+1})$ and $W^{0,1}_j(F_{i+1})$.

Since $W_{i+1}$ contains the horizontal boundaries of all the stripes $\Pi_j$, the map $F_{i+1}$ is smooth on $\TT \setminus W_{i+1}$ (and also at the four endpoints of the curves $W^0_{i+1}$ and $W^1_{i+1}$). 

By Proposition~\ref{p:stripes-preserved}, $W^{0, 1}_{i+1}(F_{i+1})=W^{0, 1}_{i+1}(F_i)$. By the induction assumption, these curves are smooth and their tangent vectors are in the cones $C_H$.

It is clear that 
\begin{equation} \label{e:grow-W}
W^0_{i+2} \setminus W^0_{i+1} = F_{i+1}^{-1}(W^0_{i+1} \setminus W^0_i).
\end{equation}
Since the map $F_{i+1}$ is smooth on $W^0_{i+2} \setminus W^0_{i+1}$, the map $F_{i+1}^{-1}$ is smooth on $W^0_{i+1} \setminus W^0_i$. Thus~\eqref{e:grow-W} yields that $W^0_{i+2} \setminus W^0_{i+1}$ is the union of two smooth curves. Since $F_{i+1}$ is smooth at the endpoints $a$ and $b$ of the curve $W^0_{i+1}$ (which are in the boundary of $W^0_{i+2} \setminus W^0_{i+1}$), we can establish the smoothness of the curves obtained from $W^0_{i+2} \setminus W^0_{i+1}$ by adding two small pieces of $W^0_{i+1}$ near the endpoints $a$ and $b$. Therefore, $W^0_{i+2} = (W^0_{i+2} \setminus W^0_{i+1}) \cup W^0_{i+1}$ is a smooth curve. 

By claims~\ref{ii:Pi-j} and~\ref{ii:close}, at any point $x \in \TT \setminus W^0_{i+1}$ the derivative $d_x F_{i+1}$ is $\di$-close to $d_x \FI$. By~\eqref{e:di} we have $d_x F_{i+1}(C_H(x)) \supset C_H(F_{i+1}(x))$. Therefore, at any point $y \in W^0_{i+1} \setminus W^0_i$ we have $d_y F^{-1}_{i+1}(C_H(y)) \subset C_H(F_{i+1}^{-1}(y))$. Using~\eqref{e:grow-W} and the fact that the vectors tangent to $W^0_{i+1}$ are in the cones $C_H$, we obtain that the tangent vectors of $W^0_{i+2} \setminus W^0_{i+1}$ also lie in the cones~$C_H$. 

We proved both properties for the curve $W^0_{i+2}$, using only the same properties for  $W^0_{i+1}$. The transition from $i+2$ to $i+3$ etc. is performed analogously, as well as the argument for~$W^1_m$.
\end{enumerate}
\end{proof}

\subsubsection{Construction of \texorpdfstring{$\FPL$}{F-infty}}
Let us deduce from Proposition~\ref{p:induction} the following corollary.
\begin{cor} \label{c:induction}
For any $i > N$ the map $F_i$ is linear in restriction to any segment of level $N+1, \dots, i$. Moreover, for any $j>i$ the stripes of these levels are the same for~$F_i$ and $F_j$, and $F_j$ is obtained from $F_i$ by applying linearization to all independent $F_j$-stripes of levels from $i+1$ to~$j$.  
\end{cor}
\begin{proof}
By assertion~\ref{ii:Lcal} from Proposition~\ref{p:induction}, the map $F_i$ is linear in restriction to any vertical segment of any independent stripe of level $N+1, \dots, i$. Any dependent stripe lies inside some independent stripe, so $F_i$ is linear in restriction to its segments as well. By Proposition~\ref{p:stripes}, any segment of level $i$ is included either into a stripe of level~$i$ or into the complement of~$R$. In any case, $F_i$ is linear in restriction to this segment. 

The second statement follows directly from assertion~\ref{ii:coincide} of Proposition~\ref{p:induction} and the third follows from the first one and claims~\ref{ii:Pi-j} and~\ref{ii:Lcal} of the aforementioned proposition.

\end{proof}

\begin{prop} \label{p:FP}
For any $F_0 \in \CI$ such that $\dist_{C^1}(F_0, \FI) < \di$ there is an integer $N_0 > L$ such that for any $N>N_0$ one can construct a homeomorphism $\FPL$ such that
\begin{enumerate}
\item \label{iii:CO} $\FPL \in \CO$; 
\item \label{iii:Pi-j} $\FPL=F_0$ outside the (countable) union $\bigsqcup_j \Pi_j$ of all independent $\FPL$-stripes of levels $N+1, N+2, \dots$; 
\item \label{iii:Lcal} For any $j$ we have $\FPL|_{\Pi_j} = \Lcal_{\Pi_j}(F_0)$; 
\item \label{iii:close} For any $j$ the restriction of $\FPL$ to $\Pi_j$ is smooth and $\di$-close to $\FI$ in the $\dist_{\Pi_j}$-metric; 
\item \label{iii:roughly} For any $m>L$ all $\FPL$-stripes of level $m$ are roughly horizontal;
\item \label{iii:linear} $\FPL$ is linear in restriction to any its segment of level greater than~$N$.
\end{enumerate}
\end{prop}

\begin{proof}
Take the number~$N_0$ from Proposition~\ref{p:induction}. For an arbitrary $N>N_0$, consider a sequence $(F_i)_{i=N}^\infty$ introduced in the beginning of section~\ref{s:induction}. 
Let us prove that this sequence is fundamental in the space $\CO$ endowed with the metric $$\dist_{\Homeo}(F,G) = \max(d_{C_0}(F,G), d_{C_0}(F^{-1}, G^{-1})).$$

Consider two homeomorphisms $F_k$ and $F_l$ with $l>k$. By Corollary~\ref{p:induction}, $F_k$ and $F_l$ coincide outside the union of all independent $F_l$-stripes of levels from $k+1$ to $l$. 
Since on the boundaries of these stripes we have $F_k=F_l=F_0$, we see that $\dist_{C^0}(F_k, F_l)$ is limited from above by the maximal vertical width of the $F_0$-images of these stripes.
Analogously, $\dist_{C^0}(F_k^{-1}, F_l^{-1})$ is bounded by the maximal width of the stripes themselves. If $k$ is large enough, all these stripes are very thin by property~\ref{i:expand} of the class~$\CO$. 
Thus, for any $\eps>0$ one can find an integer~$n$ such that for $l>k>n$ the homeomorphisms~$F_k$ and $F_l$ are $\eps$-close in our metric.  

The set $\CO$ is closed in $\Homeo(\TT)$, because every condition in its definition is a closed one. Therefore, $\CO$ is a complete metric space, as a closed subset of a complete metric space~$\Homeo(\TT)$. Thus, the sequence $(F_i)_{i=N}^\infty$ converges. Let $\FPL$ be its limit. Then $\FPL \in \CO$.

Properties $2-6$ that we need to establish follow from Proposition~\ref{p:induction} and Corollary~\ref{c:induction}. It should be noted that on the boundaries of the independent $\FPL$-stripes of levels greater than $N$ all maps $F_i$ coincide with $F_0$, and therefore the same holds for the map $\FPL$.  
\end{proof}

\subsection{Smoothing} \label{s:smoothing}
Now we can finally prove Lemma~\ref{l:dense}.
For a given $F_0 \in \CI$ and an arbitrary $\delta>0$ we will construct a diffeomorphism $F \in A_\eps \cap \CI$ such that $\dist_{C^1}(F, F_0) < C\delta$, where the constant $C$ is independent of~$F_0$. We can assume that $\dist_{C^1}(F_0, \FI) < \di$: if $\dist_{C^1}(F_0, \FI) = \di$, Lemma \ref{p:induction} cannot be applied, so we will replace $F_0$ by another map $\tilde F_0$ such that $\dist_{C^1}(\tilde F_0, \FI) < \di$, $\dist_{C^1}(F_0, \tilde F_0) < \delta/2$ and construct $F$ for the new $\tilde F_0$, with $\delta$ replaced with $\delta/2$.

Consider the number $N_0$ from Proposition~\ref{p:FP}. By Corollary~\ref{cor:delta}, we can take a large $N>N_0$ such that for the map $\FPL$ from Proposition~\ref{p:FP} the following holds: the restrictions of $F_0$ and $\FPL$ to any $\FPL$-stripe of level greater than $N$ are $\delta$-close in $C^1$.

It suffices to prove that arbitrarily close to the map $\FPL$ in the $\Homeo$-topology there is a map $F \in \CI$ such that $\dist_{C^1}(F, F_0) < C\delta$. Then the proof of Lemma~\ref{l:dense} goes as follows. By Proposition~\ref{l:N-linear}, $\mu(B(\FPL))=1$. Thus, $\FPL \in A_\eps$. Since by Lemma~\ref{l:open} the set $A_\eps$ is open in $C^0$, whereas $F$ is $C^0$-close to $\FPL$, we have $F \in A_\eps$. Since $F$ is close to $F_0$ in~$C^1$, this finishes the proof of Lemma~\ref{l:dense}.

It remains to construct the map $F$, and for that matter we need the following lemma.	

\begin{lem}\label{l:incaps}
There exists a universal constant $C > 0$ such that the following holds. Take any $F_0\in\CI$ and any roughly horizontal stripe~$\Pi$. Then for any $\gamma > 0$ there exists an $F\in C^1(\TT, \TT)$ such that
\begin{itemize}
\item $F = F_0$ outside $\Pi$;
\item $\dist_{C^1}(F_0|_{\Pi}, F|_{\Pi}) \le C\cdot\dist_{C^1}(F_0|_{\Pi}, \FBL)$;
\item $\dist_{C^0}(\FBL, F|_{\Pi}) < \gamma$;
\item $F$ preserves the vertical foliation. 
\end{itemize} 
\end{lem}
The proof of this lemma will be given in section~\ref{s:technical}.
Now let us get back to the proof of lemma~\ref{l:dense}.
Let $\gamma > 0$ be so small that the $\gamma$-neighborhood of the map $\FPL$ in $\CO$ in the $\Homeo$-topology is included into~$A_\eps$. 
Since the width of the $\FPL$-stripes tends to zero as their level grows, there is an integer $N_1 > N$ such that outside the union of all $\FPL$-stripes of levels $N+1, \dots, N_1-1$ the maps $\FPL$ and $F_0$ are $\gamma$-close in the $\Homeo$-topology. Now, if we change the map $F_0$ in such a way that on the union of these stripes the new map be close to $\FPL$, then this new map will also lie in~$A_\eps$.

Let us modify the map $F_0$ in the following way: for every independent $\FPL$-stripe $\Pi$ of level from $N+1$ to $N_1$, let us replace $F_0$ inside this stripe by the map~$F$ from Lemma~\ref{l:incaps}. Denote the new map by~$F$ as well; then $F = F_0$ outside the inion of the aforementioned stripes by construction. Since independent stripes do not intersect, 
\begin{itemize}
\item $F$ is $C^1$-smooth;
\item $\dist_{C^1}(F_0, F) \le C\dist_{C^1}(F_0, \FPL) < C\delta$ (due to the choice of $N$  in the construction of $F_0$); 
\item $\dist_{C^0}(F, \FPL) < \gamma$.
\end{itemize}

If $\delta$ is sufficiently small, the second property implies that $F$ is a diffeomorphism (because $F_0$ is). Recall that $\dist_{C^1}(F_0, \FI) < \di$. Thus, for a small $\delta$ we have $\dist_{C^1}(F, \FI) < \di$. Due to the choice of~$\di$, the map $F$ satisfies properties~\ref{i:bilip} and~\ref{i:expand} from the definition of the class~$\CO$: property~\ref{i:bilip} follows from~\eqref{e:di}, whereas property~\ref{iv:expand} from Proposition~\ref{p:Finit} and the fact that $\di< 0.1$ imply that $F$ dilates with a factor at least 1.1 in restriction to any vertical segment, which, combined with property~\ref{i:bilip}, yields property~\ref{i:expand}. Property~\ref{i:skew} holds by Lemma~\ref{l:incaps}. Arguing as in the proof of Proposition~\ref{p:induction}, one can check that~$F$ satisfies the rest of the conditions from the definition of the class~$\CO$. Thus, $F \in \CI$.
The proof of Lemma~\ref{l:dense} is complete.

\section{\texorpdfstring{$\FI$}{F-init} exists} \label{s:Finit}
In this section we will prove Proposition~\ref{p:Finit}. Let us recall it for convenience.
\newtheorem*{p:Finit}{Proposition~\ref{p:Finit}}
\begin{p:Finit}
For $\NI$ large enough, there exists a $C^1$-smooth Anosov diffeomorphism $\FI: \TT \to \TT$ such that:
\begin{enumerate}
\item
$\FI=\FB$ on $UH_0 \cup UH_1$. Thus, $\FI$ has a semi-thick horseshoe $\HS$.
\item
$\FI=\FL$ outside $\tilde R$. 
\item $\FI(UH_i)=UV_i$. 
\item $\FI(\UK) \cap \UK = UV_0 \cup UV_1$.
\item the unstable fibers of $\FI$ are strictly vertical.
\item $\FI$ expands the vertical fibers by a factor of at least $1.2$.
\item For any point $x \in \TT$ the image of the interior of the cone $C_H(x)$ under $d\FI$ covers the closure of the cone $C_H(\FI(x))$.
\end{enumerate}
\end{p:Finit}

\subsection{Bowen's map}
On any segment a Cantor set can be constructed using the following procedure. At the first step let us remove from our segment a concentric interval of length $a_1$. Then, at step two, remove from each of the remaining two segments a concentric interval of length $a_2/2$. Given a sequence $(a_j)_{j\in\NN}$, we continue this process to obtain a Cantor set in the limit: on step $i$ we remove $2^i$ intervals of length $2^{-i}a_i$. This set will be called \emph{a symmetric Cantor set}. We will consider only Cantor sets such that, first, $\sum\limits_{i=1}^\infty a_i$ is less than the length of the original segment (and therefore, the measure of our Cantor set will be positive), and second, the sequence $a_i$ decreases and $\lim\limits_{n\to \infty}a_n/a_{n+1}=1$. For example, one can take $a_n = \frac C {n^2}$. We will call symmetric Cantor sets with these two properties \emph{Bowen sets}. We will need the following elementary property of such sets:

\begin{prop} \label{p:equal_measures}
Let $\K$ be a Bowen set. Then the restriction $\Leb_{\K}$ of the Lebesgue measure on $\K$, rescaled to be a probability measure, is equal to the image $\nu$ of the Bernoulli measure on $\{0, 1\}^{\mathbb N}$ under the natural encoding map $\chi : \{0, 1\}^{\mathbb N} \to \K$. 
\end{prop}
\begin{proof}
Given any finite word of ones and zeroes $w$, set $I_w \subset \K$ to be a subset of $\K$ that consists of points $x$ such that $\chi^{-1}(x)$ begins with~$w$. Since $\K$ is symmetric, we have $\Leb_{K}(I_w) = 2^{-|w|}$. It is clear that $\nu(I_w)$ also equals $2^{-|w|}$. Since the sets $I_w$ generate the Borel $\sigma$-algebra on~$\K$, we have $\Leb_{\K} = \nu$.
\end{proof}

Consider a Bowen set on~$[a, b]$. Denote the first removed interval of the complement by~$(x_1, y_1)$. Our Cantor set naturally splits into two halves: one to the left from the interval $(x_1, y_1)$ and another one to the right. Let \emph {the Bowen map} from $[a,x_1]$ to $[a,b]$ be defined by the following conditions: it is $C^1$-smooth and it maps the left half of the Cantor set to the whole Cantor set and the intervals removed at step $n$ ($n>1$) from the left half to the intervals removed from the whole Bowen set at step $(n-1)$, preserving the order and orientation. Note that by defining the map at the endpoints of those intervals we define it on the whole left half of the Cantor set by continuity, because the endpoints are dense in the Cantor set. The Bowen map from $[y_1,b]$ to $[a,b]$ is defined analogously.

From the Bowen thick horseshoe construction in~\cite{Bow} one can easily deduce the following fact:
\begin{prop}
For any Bowen set there is a Bowen map, and its derivative at the points of the Bowen set equals~2. This map can be chosen so that at the points of the complement the derivative be gretater than~2. 
\end{prop}

\subsection{The map~\texorpdfstring{$\FB$}{F-Bow}} \label{s:FB}
In section~\ref{s:construction} we have briefly described the map~$\FB$ on the rectangles~$UH_0$ and~$UH_1$. In the present section we will describe this map in detail. In order to construct our $\FI$ we will also need to extend $\FB$ to the whole two-torus~$\TT$.

Let us define the horizontal stripe $RH_0$ (see fig.~\ref{f:rhi}), such that $UH_0 \subset RH_0 \subset \tilde R$, in the following way. First slightly enlarge $UH_0$ upwards and downwards to obtain a stripe $\tilde RH_0 \subset \tilde R$. The vertical gap between the horizontal boundaries of $\tilde RH_0$ and $UH_0$ should be less than~$\kappa$, and the number~$\kappa$ will be specified below in section~\ref{s:cones}. To obtain~$RH_0$, we continue $\tilde RH_0$ to the left and to the right until we reach the vertical boundary of~$\tilde R$. We define the stripes $\tilde RH_1 \supset UH_1$ and $RH_1 \supset \tilde RH_1$ analogously. 

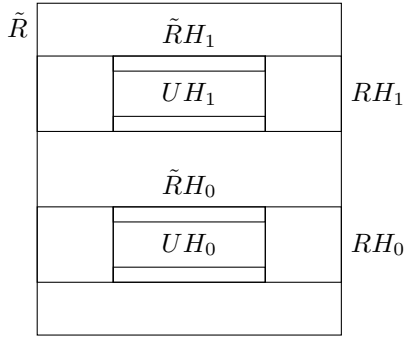
\begin{figure} 
\begin{center}
\begin{tikzpicture}[scale=1]

\draw (0,0) rectangle (2,1);
\draw (0,0) rectangle (-1,1);
\draw (2,0) rectangle (3,1);
\draw (0,0) rectangle (2,0.2);
\draw (0,1) rectangle (2,0.8);
\node at (1, 0.5) {$UH_0$};
\node [above] at (1, 1) {$\tilde RH_0$};
\node [right] at (3, 0.5) {$RH_0$};

\def \ys{-2}
\draw (-1,-0.7) rectangle (3,3.7);
\node [below left] at (-1, 3.7){$\tilde R$};

\draw (0,0-\ys) rectangle (2,1-\ys);
\draw (0,0-\ys) rectangle (-1,1-\ys);
\draw (2,0-\ys) rectangle (3,1-\ys);
\draw (0,0-\ys) rectangle (2,0.2-\ys);
\draw (0,1-\ys) rectangle (2,0.8-\ys);
\node at (1, 0.5-\ys) {$UH_1$};
\node [above] at (1, 1-\ys) {$\tilde RH_1$};
\node [right] at (3, 0.5-\ys) {$RH_1$};

\vspace{-15mm}
\end{tikzpicture}
\end{center}
\caption{$\tilde R$, $UH_i$, $\tilde RH_i$, $RH_i$.}
\label{f:rhi}
\end{figure}

The map $\FB$ is going to coincide with $\FL$ outside $RH_0 \cup RH_1$, and inside those two stripes $\FB$ is obtained from $\FL$ by a certain surgery. 
We define $\FB$ inside $RH_0$, and in $RH_1$ the construction is fully analogous. In restriction to $RH_0$ the map $\FL$ is a direct product of an affine horizontal contraction and an affine vertical dilation $f_L$. The map $\FB$, in restriction to $RH_0$, will be  a direct product of the same affine horizontal contraction and a nonlinear vertical dilation $f_{Bow}$ that will be described in a moment.

Denote by $Q_0$ and $Q_1$ the projections of $UH_0$ and $UH_1$ to the vertical axis, and let $Q$ be the convex hull of~$Q_0 \cup Q_1$. Let $I_1 = Q \setminus (Q_0\cup Q_1)$. Choose a Bowen set~$\CT$ of positive measure included into~$Q$ and such that the interval removed at the first step coincides with~$I_1$. Then, by the previous section, there is a Bowen map $f_{Bow} : Q_0 \to Q$, constructed as above. Let $RQ_0 \supset Q_0$ be the projection of $RH_0$ to the vertical axis. Continue the map $f_{Bow}$ to $RQ_0$ in such a way that in a vicinity of the boundary of $RQ_0$ it coincide with~$f_L$. This ensures that $\FB$ will be smooth on~$\dh RH_0$. Since $f_L' > 2$, we can assume that $f_{Bow}' \ge 2$ on $RQ_0$.

The construction implies that 
\begin{itemize}
\item
$\FB(UH_i) = \FL(UH_i)$ and $\FB(RH_i) = \FL(RH_i)$ for $i=0, 1$,
\item 
$\FB$ is a bijection,
\item 
$\FB$ has discontinuities on $\dv RH_0 \cup \dv RH_1$ and is smooth everywhere else.
\end{itemize}

\subsection{Construction of~\texorpdfstring{$\FI$}{F-init}}
Were it smooth, the map $\FB$ could play the role of~$\FI$. So, we are going to obtain $\FI$ by smoothing $\FB$ inside $(RH_0 \setminus \tilde RH_0) \cup (RH_1 \setminus \tilde RH_1)$ (that is, in the vicinity of the vertical boundaries of the stripes on which the surgery has been performed to construct~$\FB$). 

Consider the coordinates $Oxy$ on $\tilde R$ such that the $x$-axis is, as usual, horizontal and the $y$-axis is vertical. Choose a smooth bump function $\varphi(x)$ that equals zero on the horizontal projection of~$\UK$, equals one in the neighborhood of the boundary of the horizontal projection of~$\tilde R$, and satisfies $|\varphi'| < 200$. The last condition is satisfiable because $\pi_{hor}(\tilde R) \setminus \pi_{hor}(\UK)$ consists of two segments of length~$0.01$; here $\pi_{hor}$ is the horizontal projection.

Define the map $\FI$ inside $RH_0 \cup RH_1$ by the formula
\begin{equation}
\FI(x,y)=\varphi(x)\FL(x,y)+(1-\varphi(x))\FB(x,y), \label{e:phy}
\end{equation}
and set $\FI = \FL$ outside this union.
Inside $RH_0 \cup RH_1$ our map is smooth, because there both $\FB$ and $\FL$ are smooth and $\varphi$ is smooth everywhere on its domain. Near the boundary of the set $RH_0 \cup RH_1$ $\FI$ equals $\FL$ and therefore is smooth, too. The maps $\FB$ and $\FL$ both take pieces of the vertical fibers cut out by $\tilde R$ monotonically into the pieces of the vertical fibers cut out by $\FL(\tilde R)$. Therefore, $\FI$ is bijective in restriction to any such fiber (as a convex combination of the restrictions of $\FB$ and $\FL$), and hence, in restriction to~$\tilde R$. Since outside $\tilde R$ the map $\FI$ coincides with~$\FL$, it is bijective on the whole torus~$\TT$. 

In section~\ref{s:cones} below we will check that $\FI$ is an Anosov diffeomorphism and prove that condition~\ref{iv:cones} from Proposition~\ref{p:Finit} holds for it. Condition~\ref{iv:only-V} is satisfied because, by construction, $\FL(\UK) = \FB(\UK) = \FI(\UK)$.
The rest of the properties declared in Proposition~\ref{p:Finit} follow straightforwardly from the construction of~$\FI$.

\subsection{\texorpdfstring{$F_{init}$}{F-init} is an Anosov diffeomorphism} \label{s:cones}

Set $\psi(x, y) = \phy(x)$ inside $\tilde R$ and $\psi(x,y) = 1$ outside~$\tilde R$. Then the Leibniz formula gives, at every point in $\tilde R$,
\[
d\FI = \psi d\FL + (1-\psi)d\FB \quad  + \quad (\FL-\FB)d\psi.
\]
Here $d\psi$ is a row vector and $\FL-\FB$ is a column vector, so their product is a $2 \times 2$ matrix.

Fix some point $p\in \tilde R$ and denote $$A=\psi(p) d_p\FL + (1-\psi(p))d_p\FB, \;\; B=(\FL(p)-\FB(p))d_p\psi.$$ Then for large $\NI$

\begin{itemize}
\item $ A = \bigl(\begin{smallmatrix}
a_1 & 0 \\ 0& a_2
\end{smallmatrix} \bigr),$ with $ a_1 \in (0, 0.5), \; a_2 \ge 2$;
\item $\|B\| < c$, for some small number $c$ independent of~$p$, and $B$ has the form $\bigl(\begin{smallmatrix}
0 & 0 \\ \delta & 0
\end{smallmatrix} \bigr)$.
\end{itemize}

Let us check the first claim. For $\FL$ and $\FB$ the vertical and horizontal directions are eigendirections, and there is contraction in the horizontal direction and an expansion in the vertical direction with a factor at least two. Therefore, the same is true for their convex combination~$A$. 


Since in $\tilde R$ the maps $\FL$ and $\FB$ permute the vertical fibers in the same way, $\FL-\FB = (\begin{smallmatrix} 0 \\ * \end{smallmatrix})$.   
Since $\psi(x, y) = \phy(x)$, we have $d\psi = (* \; 0)$, so $B = \bigl(\begin{smallmatrix}
0 & 0 \\ * & 0
\end{smallmatrix} \bigr)$.
Let us show that for a large $\NI$ we have $\|B\| < c$. By definition, $B=(\FL-\FB)d\psi$.
The factor $d\psi$ is bounded by the construction of $\psi$: $\psi$ does not depend on $y$, and $|\psi'_x| = |\phy'| < 200$, so it suffices to estimate the difference $\FL-\FB$. The maps $\FL$ and $\FB$ preserve the vertical foliation and permute the vertical fibers in the same way. Since they coincide on $\dh RH_i$, the factor $\FL-\FB$ is bounded by the vertical size (that is, the diameter of the projection to the vertical axis) of $\FL(RH_i)$. Since $\FL(UH_i) = UV_i$, the vertical size of $\FL(UH_i)$ equals the vertical size of~$\KK$. If the number~$\kappa$ introduced in section~\ref{s:FB} (recall that it is the size of the vertical gap between the boundaries of $UH_i$ и $RH_i$) is small enough, the vertical size of $\FL(RH_i)$ is smaller than the doubled vertical size of~$\KK$. Since for a large $\NI$ the rectangle $\KK$ is very small, we have $\|B\| < c$.

Thus, we have $d\FI(p) = A + B = (\begin{smallmatrix}
a_1 & 0 \\ \delta & a_2
\end{smallmatrix} )$. This lower triangular matrix is obviously non-degenerate. One of its eigendirections is purely vertical, and another one is almost horizontal: indeed, the eigenvector has the form $(a_2-a_1, \, -\delta)$, and $\delta$ is small, while $|a_2-a_1|\ge 1.5$.

Furthermore, it is straightforward to check that $d_p\FI$ takes the complement of the cone $C_H(p)$ inside itself, so condition~\ref{iv:cones} from Proposition~\ref{p:Finit} holds at~$p$. Indeed, $d\FI(x)(u,v) = (a_1u, \delta u + a_2 v)$, and if $|u|<|v|$, then we have $$|a_1u|<|a_1v|<\gamma(|a_2 v| - |\delta v|) < \gamma|a_2 v + \delta u|,$$
for some $\gamma\in(0,1)$.  
Likewise, it is straightforward to show that for the vertical and the horizontal cone fields $C_{\alpha, V} = \{(u,v)\mid |u| \le \alpha |v|\}$ and $C_{\alpha, H} = \{(u,v)\mid |v| \le \alpha |u|\}$ of some small aperture $\alpha$ (however, $\delta$ should always be small relative to this $\alpha$) the cone condition holds, i.e., the cones are expanded and mapped inside the cones of the same field under $d\FI$ or $d\FI^{-1}$, respectively. Invertibility of $d\FI$ outside $\tilde R$ is beyond doubt, as well as the fact that the cones condition holds there for the same cone fields, because there we have $\FI = \FL$. Thus we have established that $\FI$ is an Anosov diffeomorphism.

\section{Proofs of the two technical lemmas} \label{s:technical}
In this section we will prove the two lemmas stated above.
\newtheorem*{l:delta}{Lemma~\ref{l:delta}}
\begin{l:delta}
Consider an arbitrary homeomorphism $F_0 \in \CI$ and an arbitrary roughly horizontal stripe~$\Pi$. Suppose that for some $\delta > 0$ for any points $p, q \in \Pi$ that lie on the same vertical segment we have
$\|dF_0(p) - dF_0(q)\| < \delta.$
Then $\dist_{\Pi}(F_0, \Lcal_\Pi(F_0)) < \sqrt{5}\delta$.
\end{l:delta}

\begin{proof}
Denote $\Lcal_\Pi(F_0)$ by $\FPLL$ and the horizontal and the vertical components of $F_0$ by $g_0$ and $f_0$ respectively:
\[
F_0: (x, y) \mapsto (g_0(x, y), f_0(x, y)).
\]
Then for any $p, q \in \Pi$ that lie on the same vertical interval we have $\|df_0(p) - df_0(q)\| < \delta$. Let $\fpl$ be the vertical component of $\FPLL$. Since the horizontal components of $F_0$ and $\FPLL$ are equal, we can write
\[
\|dF_0(p) - d\FPLL(p)\| = \|df_0(p) - d\fpl(p)\|.
\]

Let $v$ and $h$ be the vertical and horizontal unit (constant) vector fields on the neighborhood of $\Pi$. Consider any vertical section $I$ of the stripe~$\Pi$. The derivative $\nabla_v(\fpl)|_I$ is constant and equals the average value of $\nabla_v(f_0)$ on $I$. Hence, for any $p \in I$ we have
\begin{equation} \label{e:vertical}
|\nabla_v(f_0)(p) - \nabla_v(\fpl)(p)| \le \max_{q \in I}\|df_0(p) - df_0(q)\| < \delta.
\end{equation}
Let $a$ be the upper endpoint of $I$ and $w \in T_a \TT$ be the unit vector tangent to the boundary of $\Pi$. Since the restrictions of $f_0$ and $\fpl$ to $\partial_h\Pi$ coincide, we see (extrapolating $\fpl$ outside $\Pi$ linearly) that $\nabla_w(\fpl)(a)=\nabla_w(f_0)(a)$. Since the stripe is roughly horizontal, we may write $w = (c_v v + c_h h)(a)$, where $|c_h| > 1/\sqrt{2} > |c_v|$. Thus we have
\[c_h\nabla_h(\fpl)(a) + c_v\nabla_v(\fpl)(a) = \nabla_w(\fpl)(a) = \nabla_w(f_0)(a) = c_h\nabla_h(f_0)(a) + c_v\nabla_v(f_0)(a).
\]
Using inequality~\eqref{e:vertical}, we obtain the following estimate:
\[
|\nabla_h(\fpl)(a) - \nabla_h(f_0)(a)|
= \frac{|c_v|}{|c_h|}|\nabla_v(\fpl)(a) - \nabla_v(f_0)(a)|
< \frac{|c_v|}{|c_h|}\delta < \delta.
\]
A similar inequality holds at the lower endpoint $b$ of $I$. Take a number $s$ such that $\nabla_h(f_0)(z) \in (s-\delta/2, s+\delta/2)$ for any $z \in I$. Then both $\nabla_h(\fpl)(a)$ and $\nabla_h(\fpl)(b)$ are in $(s-3\delta/2, s+3\delta/2)$. 
Since the map $\fpl$ is linear on vertical intervals, we may represent it as
\[
\fpl(x, y) = \alpha(x)y + \beta(x),
\]
\[
\nabla_h(\fpl)(x, y) = \alpha'(x)y + \beta'(x).
\]
Thus the function $\nabla_h(\fpl)(x, y)$ is linear in $y$ on $I$, and therefore, $\nabla_h(\fpl)(I) \subset (s-3\delta/2, s+3\delta/2)$. Thus,
$$\nabla_h(f_0)(I) \subset (s-\delta/2, s+\delta/2),\quad\nabla_h(\fpl)(I) \subset (s-3\delta/2, s+3\delta/2),$$
which implies
$|\nabla_h(f_0)(p) - \nabla_h(\fpl)(p)| < 2\delta$. Using~\eqref{e:vertical}, we obtain the required estimate: 
\[
\|dF_0(p) - d\FPLL(p)\| = \|df_0(p) - d\fpl(p)\| < \sqrt{5}\delta.
\]
\end{proof}

\newtheorem*{l:incaps}{Lemma~\ref{l:incaps}}
\begin{l:incaps}
There exists a universal constant $C > 0$ such that the following holds. Consider an arbitrary $F_0\in\CI$ and an arbitrary roughly horizontal stripe~$\Pi$. For any $\gamma > 0$ there exists $F\in C^1(\TT, \TT)$ such that
\begin{itemize}
\item $F = F_0$ outside $\Pi$;
\item $\dist_{C^1}(F_0|_{\Pi}, F|_{\Pi}) \le C\cdot\dist_{C^1}(F_0|_{\Pi}, \FBL)$;
\item $\dist_{C^0}(\FBL, F|_{\Pi}) < \gamma$;
\item $F$ preserves the vertical foliation.
\end{itemize} 
\end{l:incaps}

\begin{proof}
Consider the map $\FBL$. First, $F_0|_{\dh\Pi} = \FBL|_{\dh\Pi}$, because $\FBL$ is obtained from~$F_0$ by linearization on~$\Pi$. Second, $F_0|_{\dv\Pi} = \FBL|_{\dv\Pi}$, because on $\dv\Pi$ the map~$F_0$ is already linear.
Since our stripe is roughly horizontal, in the $(x, y)$ coordinates on~$R$ its boundary consists of the graphs of two smooth functions $\varphi_1,\varphi_2\colon\varphi_1(x) < \varphi_2(x)$. Consider a function $\rho\in C^\infty(\mathbb R)$ such that $\rho(t) = 0$ when $t<0$, $\rho(t) = 1$ when $t > 1$, and $\rho$ is monotonically increasing on $[0,1]$. Fix some small $\alpha > 0$ and define $F$ on $\Pi$ as follows:
\begin{multline*}
F(x,y) = \rho\left(\frac{\varphi_2(x) - y}{\alpha}\right)\rho\left(\frac{y -\varphi_1(x)}{\alpha}\right)\FBL +
 \left(1 - \rho\left(\frac{\varphi_2(x) - y}{\alpha}\right)\rho\left(\frac{y -\varphi_1(x)}{\alpha}\right)\right)F_0.
\end{multline*}
Obviously, $F$ preserves the vertical foliation.
The map $F|_{\overline\Pi}$ is $C^1$-smaooth and coincides with $F_0$ on $\partial\Pi$. If the vertical distance from the point $(x, y)\in \Pi$ to $\dh\Pi$ is greater than $\alpha$, then $F(x, y) = \FBL(x, y)$.  

Let us show that $$\dist_{C^1}(F_0|_{\Pi}, F|_{\Pi}) \le C\cdot\dist_{C^1}(F_0|_{\Pi}, \FBL).$$ Consider $\delta$ such that $\|dF_0 - d\FBL\| < \delta$ on $\Pi$; here $\delta$ is not necessarily small. First, we will estimate the difference of $y$-derivatives for $F_0$ and $F$. For short, let us use the notation $\rho_1(x,y) = \rho\left(\frac{y -\varphi_1(x)}{\alpha}\right), \; \rho_2(x,y) = \rho\left(\frac{\varphi_2(x) - y}{\alpha}\right)$. Outside $\Pi$ set $F = F_0$. 
Outside the neighborhood of the set $\dh\Pi$ we have $\rho_1 = \rho_2 = 1$ and
\[  \|F'_y - (F_0)'_y\| = \|(\FBL)'_y - (F_0)'_y\| < \delta.\]
Inside the neighborhood of the upper boundary of $\Pi$ we have $\rho_1 \equiv 1$, and therefore, $F = \rho_2\FBL + (1-\rho_2)F_0$, which yields
$$\|F'_y - (F_0)'_y\| = \|(\rho_2)'_y(\FBL - F_0) + \rho_2 \cdot ((\FBL)'_y - (F_0)'_y)\|\le$$
$$\le \frac{1}{\alpha}\cdot\max|\rho'|\cdot\|\FBL - F_0\| + \|(\FBL)'_y - (F_0)'_y\|.$$
Yet again, $\|(\FBL)'_y - (F_0)'_y\|\le \delta.$ This inequality also implies that if the vertical distance from the point $(x,y)$ to the upper boundary of the stripe is less than $\alpha$, we have the estimate $\|\FBL - F_0\| \le \alpha\delta$. Thus, in the vicinity of the upper boundary we have $\|F'_y - (F_0)'_y\| \le C_y\delta$, where $C_y = \max|\rho'| + 1$ is a constant independent of $F_0$ and the choice of~$\Pi$. Likewise we can obtain an analogous estimate in the vicinity of the lower boundary of~$\Pi$. Furthermore, an analogous argument is applicable to the difference of the derivatives in $x$ in the vicinity of $\partial_h\Pi$, the only difference is that the derivatives of $\varphi_j$ come into play:
$$\|F'_x - (F_0)'_x\| \le (1 + \max|\rho'|\cdot \max_x(|\varphi'_1(x)|, |\varphi'_2(x)|))\cdot\delta = C_x\cdot\delta.$$
Since the stripe $\Pi$ is roughly horizontal, $|\varphi'_i(x)| \le 1$, so we can take $C_x = C_y$.
Thus, inside the stripe $\Pi$ we have an estimate $\|dF - dF_0\| \le C_x\delta$, where the constant $C_x$ is fixed and does not depend on the stripe $\Pi$ or on the map $F_0$.
Since the width of the stripe is bounded by the diameter of the torus and on the boundary of the stripe we have $F = F_0$, the distance between $F$ and $F_0$ in $C^0$ is also at most $C_x \cdot \delta$. 
Therefore, we get an estimate $\dist_{C^1}(F_0|_{\Pi}, F|_{\Pi}) \le C\cdot\dist_{C^1}(F_0|_{\Pi}, \FBL)$ with $C = 2 C_x$.

Note now that by choosing $\alpha$ small enough we can make sure that the restrictions of $F$ and $\FBL$ to $\Pi$ are $\gamma$-close in $C^0$. 
\end{proof}

\end{document}